\theoremstyle{plain}
  \newtheorem{theorem}{Theorem}[section]
  \newtheorem{proposition}[theorem]{Proposition}
  \newtheorem{lemma}[theorem]{Lemma}
  \newtheorem{corollary}[theorem]{Corollary}
\theoremstyle{definition}
  \newtheorem{example}[theorem]{Example}
  \newtheorem{question}[theorem]{Question}
 \theoremstyle{remark}
  \newtheorem{remark}[theorem]{Remark}
\numberwithin{equation}{section}
\newcommand{\reals}{{\mathbb R}}
\newcommand{\NN}{{\mathbb N}}
\newcommand{\flag}{{\mathcal F}}
\newcommand{\DD}{\Delta}
\newcommand{\A}{\mathbf{a}}
\newcommand{\B}{\mathbf{b}}
\newcommand{\C}{\mathbf{c}}
\newcommand{\D}{\mathbf{d}}
\newcommand{\conv}{\mathrm{conv}}
\newcommand{\pyr}{\mathrm{Pyr}}
\newcommand{\prism}{\mathrm{Prism}}
\newcommand{\bipyr}{\mathrm{Bipyr}}
\begin{document}

\title[]
      {Flag enumerations of matroid base polytopes}
\author{Sangwook Kim}
\address{Department of Mathematical Sciences\\
         George Mason University\\
         Fairfax, VA 22030, USA}
\email{skim22@gmu.edu}

\keywords{Matroid base polytopes, $\C\D$-index}
\begin{abstract}   
   In this paper, we study flag structures of matroid base polytopes.
   We describe faces of matroid base polytopes in terms of matroid data, 
   and give conditions for hyperplane splits of matroid base polytopes.
   Also, we show how the $\C\D$-index of a polytope can be expressed 
   when a polytope is split by a hyperplane, and 
   apply these to the $\C\D$-index of a matroid base polytope of a rank $2$ matroid.
\end{abstract}

\maketitle

\section{Introduction}
\label{sec-introduction}

   For a matroid $M$ on $[n]$, a \emph{matroid base polytope} $Q(M)$ is
   the polytope in $\reals^n$ whose vertices are the incidence vectors of the bases of $M$.
   The polytope $Q(M)$ is a face of a \emph{matroid polytope} first studied by 
   Edmonds~\cite{Edmonds}, whose vertices are the incidence vectors of \emph{all}
   independent sets in $M$.
   In this paper, we study flags of faces and the $\C\D$-index
   of matroid base polytopes.
   
   It is known that a face $\sigma$ of a matroid base polytope is 
   the matroid base polytope $Q(M_\sigma)$ for some matroid $M_\sigma$ on $[n]$
   (see \cite{FeichtnerSturmfels} and Section~\ref{sec-matroid-base-polytopes} below).
   We show that $M_\sigma$ can be described using 
   equivalence classes of factor-connected flags of subsets of $[n]$.
   As a result, one can describe faces of $Q(M)$ in terms of matroid data:
   \begin{theorem}
   [Theorem~\ref{theorem-posets-for-faces-of-matroid-polytopes}]
      Let $M$ be a connected matroid on a ground set $[n]$.
      For a face $\sigma$ of the matroid base polytope $Q(M)$, one can associate a poset $P_\sigma$ 
      defined as follows:
      \begin{enumerate}[(i)]
         \item
         the elements of $P_\sigma$ are the connected components of $M_\sigma$, and
         \item
         for distinct connected components $C_1$ and $C_2$ of $M_\sigma$,
         $C_1 < C_2$ if and only if 
         $$
         C_2 \subseteq S \subseteq [n] \text{ and }
         \sigma \subseteq H_S \text{ implies } C_1 \subseteq S,
         $$
         where $H_S$ is the hyperplane in $\reals^n$ defined by $\sum_{e \in S} x_e = r(S)$.
      \end{enumerate}
   \end{theorem}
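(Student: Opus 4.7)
The plan is to verify that the binary relation specified in (ii) is a strict partial order on the set of connected components of $M_\sigma$; extended by equality this then gives the poset $P_\sigma$. The crucial preliminary observation, which I would lift from the description of $M_\sigma$ in terms of factor-connected flags developed earlier in the paper, is that for every connected component $C$ of $M_\sigma$ the face $\sigma$ is contained in the hyperplane $H_C$. This holds because $M_\sigma$ splits as the direct sum of its connected components, so every basis $B$ of $M_\sigma$ --- equivalently, every vertex of $\sigma = Q(M_\sigma)$ --- satisfies $|B \cap C| = r_{M_\sigma}(C) = r(C)$, which is exactly the equation cutting out $H_C$.

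Given this preliminary fact, transitivity is a one-line chase. If $C_1 < C_2$ and $C_2 < C_3$, and if $S \subseteq [n]$ satisfies $C_3 \subseteq S$ and $\sigma \subseteq H_S$, then applying $C_2 < C_3$ gives $C_2 \subseteq S$, and then applying $C_1 < C_2$ to the same $S$ gives $C_1 \subseteq S$; hence $C_1 < C_3$.

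For antisymmetry, suppose by way of contradiction that $C_1$ and $C_2$ are distinct components with both $C_1 < C_2$ and $C_2 < C_1$. Take $S = C_1$; by the preliminary observation $\sigma \subseteq H_{C_1}$, and trivially $C_1 \subseteq S$, so the hypothesis $C_2 < C_1$ forces $C_2 \subseteq C_1$. Swapping the roles of $C_1$ and $C_2$ yields $C_1 \subseteq C_2$, contradicting $C_1 \neq C_2$. Irreflexivity is automatic since the relation is only declared between distinct components.

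I expect the main obstacle to lie not in the three axiom verifications, which are essentially formal, but in the precise identification of which hyperplanes $H_S$ contain $\sigma$, and in pinning down exactly what ``connected component of $M_\sigma$'' means for all faces $\sigma$ (handling loops, coloops, and parallel elements cleanly). That question is the content of the earlier material on factor-connected flags and on how $M_\sigma$ is built from equivalence classes of such flags; once those structural results are imported as a black box, the poset axioms follow essentially for free, as sketched above.
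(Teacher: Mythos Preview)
Your transitivity check is fine, but the antisymmetry argument rests on a false ``preliminary observation.'' You claim that for every connected component $C$ of $M_\sigma$ one has $\sigma \subseteq H_C$, because every basis $B$ of $M_\sigma$ satisfies $|B\cap C| = r_{M_\sigma}(C) = r(C)$. The first equality is correct, but the second is not: the hyperplane $H_C$ is defined using the rank function of $M$, not of $M_\sigma$, and these differ in general. Concretely, take $M = U_{2,4}$ and $\sigma$ the vertex $e_{\{1,2\}}$. Then $M_\sigma$ has the single basis $\{1,2\}$, its connected components are the singletons, and for $C=\{3\}$ we have $r_{M_\sigma}(\{3\})=0$ while $r_M(\{3\})=1$; thus $\sigma \not\subseteq H_{\{3\}}$. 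Your antisymmetry step ``take $S=C_1$'' therefore breaks down whenever $C_1$ is such a component.

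The paper handles antisymmetry differently and does not rely on $\sigma\subseteq H_C$. Assuming $C_1 < C_2$, it takes a \emph{minimal} set $S$ with $C_2\subseteq S$ and $\sigma\subseteq H_S$, and then invokes Lemma~\ref{lem-set-in-flag} to place $S$ inside a factor-connected flag $\mathcal F$ with $\sigma=Q(M_{\mathcal F})$; since $C_2$ is a connected component of $M_\sigma$, the set $S\setminus C_2$ also lies in that flag, hence $\sigma\subseteq H_{S\setminus C_2}$. Now $C_1\subseteq S\setminus C_2$ (from $C_1<C_2$ and disjointness of components) while $C_2\not\subseteq S\setminus C_2$, which witnesses $C_2\not< C_1$. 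The real work is thus Lemma~\ref{lem-set-in-flag}, exactly the structural input you flagged as ``the main obstacle'' but then bypassed with the incorrect shortcut.
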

   
   The $\C\D$-index $\Psi(Q)$ of a polytope $Q$, a polynomial in the noncommutative variables
   $\C$ and $\D$, is a very compact encoding of the flag numbers of a polytope $Q$~\cite{BayerKlapper}.
   Ehrenborg and Readdy~\cite{EhrenborgReaddy} express the $\C\D$-indices of a prism, a pyramid, and
   a bipyramid of a polytope $Q$ in terms of $\C\D$-indices of $Q$ and its faces.
   Also, the $\C\D$-index of zonotopes, a special class of polytopes, is well-understood
   \cite{BilleraEhrenborgReaddy97, BilleraEhrenborgReaddy98}.
   Generalizing the formulas of the $\C\D$-indices of a prism and a pyramid of a polytope, 
   we show how the $\C\D$-index of a polytope can be expressed when a polytope is split by a hyperplane
   in Section~\ref{sec-cd-index}.
   %
   
   In Section~\ref{sec-hyperplane-splits}, 
   we find the conditions when a matroid base polytope is split into two 
   matroid base polytopes by a hyperplane:
   \begin{theorem}
   [Theorem~\ref{matroid-base-polytope-split}]
      Let $M$ be a rank $r$ matroid on $[n]$ and $H$ be a hyperplane in $\reals^n$ 
      given by $\sum_{e \in S} x_e = k$.
      Then $H$ decomposes $Q(M)$ into two matroid base polytopes if and only if
      \begin{enumerate}[(i)]
         \item
         $r(S) > k$ and $r(S^c) > r-k$,
         \item
         if $I_1$ and $I_2$ are $k$-element independent subsets of $S$ such that
         $(M/I_1)|_{S^c}$ and $(M/I_2)|_{S^c}$ have rank $r-k$, then 
         $(M/I_1)|_{S^c} = (M/I_2)|_{S^c}.$
      \end{enumerate}
   \end{theorem}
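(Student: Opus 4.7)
The plan is to handle the two directions separately, exploiting the Gelfand--Goresky--MacPherson--Serganova (GGMS) characterization of matroid base polytopes as precisely the $0/1$-polytopes whose edges are parallel to some $e_i - e_j$. Let $Q^{\pm}$ denote the two pieces of $Q(M)$ cut by $H$, and $F := Q(M) \cap H$ the cut facet. A key preliminary is that for any edge of $Q(M)$, between bases $B$ and $B' = B - g + h$, one has $|B' \cap S| - |B \cap S| \in \{-1, 0, 1\}$; hence no edge of $Q(M)$ strictly crosses $H$, so the vertices of $Q^\pm$ are the incidence vectors of the bases in the respective closed halfspace, and the vertices of $F$ are those of bases with $|B \cap S| = k$. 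Since $\max_B |B \cap S| = r(S)$ and $\min_B |B \cap S| = r - r(S^c)$, $H$ splits $Q(M)$ into two full-dimensional pieces iff $r - r(S^c) < k < r(S)$, which is (i).

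For the forward direction of (ii), assume both $Q^\pm$ are matroid base polytopes. Then $F$ is a face of each, so by the face-structure theorem cited in the introduction, $F = Q(M_H)$ for some matroid $M_H$ on $[n]$. Every basis of $M_H$ has exactly $k$ elements in $S$, so $r_{M_H}(S) + r_{M_H}(S^c) = k + (r - k) = r$, meaning $S$ is a separator of $M_H$ and $M_H = M_1 \oplus M_2$ with $M_1$ on $S$ of rank $k$ and $M_2$ on $S^c$ of rank $r - k$. For any basis $I$ of $M_1$, the set $\{J \subseteq S^c : I \sqcup J \text{ is a basis of } M\}$ equals both the set of bases of $M_2$ (by $M_H = M_1 \oplus M_2$) and the set of bases of $(M/I)|_{S^c}$; hence $(M/I)|_{S^c} = M_2$ for every such $I$, giving (ii).

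For the reverse direction, assume (i) and (ii). Define $M_2 := (M/I)|_{S^c}$ for any $k$-element independent $I \subseteq S$ with $(M/I)|_{S^c}$ of rank $r - k$ (well-defined by (ii)), and let $M_1$ on $S$ have exactly those $I$'s as bases. The key step is checking basis exchange for $M_1$: given two such $I_1, I_2$ and $e \in I_1 \setminus I_2$, fix a common basis $J$ of $M_2$ so that both $I_1 \sqcup J$ and $I_2 \sqcup J$ are bases of $M$; then matroid exchange in $M$ supplies $g \in I_2 \setminus I_1$ with $(I_1 - e + g) \sqcup J$ a basis of $M$, making $I_1 - e + g$ again one of the allowed $I$'s. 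Then $M_H := M_1 \oplus M_2$ is a matroid with $Q(M_H) = F$. Finally, every edge of $Q^+$ is either a $1$-face of $Q(M)$ contained in $Q^+$---parallel to some $e_i - e_j$ since $Q(M)$ is a matroid base polytope---or a face of $F$, in which case it is parallel to some $e_i - e_j$ since $F = Q(M_H)$ is a matroid base polytope; thus $Q^+$ is a matroid base polytope by GGMS, and symmetrically for $Q^-$.

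The main obstacle I expect is this reverse-direction basis exchange for $M_1$: condition (ii) is used crucially to pick a \emph{single} $J$ that is simultaneously a basis of $(M/I_1)|_{S^c}$ and $(M/I_2)|_{S^c}$---impossible without (ii)---so that the exchange in $M$ lands back in the allowed set of $I$'s. Everything else is bookkeeping about edges and vertices of the cut, or direct invocations of the face-structure theorem and the GGMS criterion.
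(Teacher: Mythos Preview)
Your proposal is correct and follows essentially the same strategy as the paper: both reduce the question to whether $\mathcal{B}_k := \{B \in \mathcal{B}(M): |B \cap S| = k\}$ is the base set of a matroid (using that no edge of $Q(M)$ strictly crosses $H$, together with the GGMS edge criterion), and both verify basis exchange for $\mathcal{B}_k$ by the same mix-and-match trick---condition (ii) lets one pair any admissible $I \subseteq S$ with any admissible $J \subseteq S^c$ and then run exchange in $M$. The only cosmetic difference is in the forward direction: the paper deduces (ii) from $\mathcal{B}_k$ being a matroid via an induction on $|I_1 \setminus I_2|$, whereas you read it off directly from the separator decomposition $M_H = M_1 \oplus M_2$, which is a slight streamlining.
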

   
   We apply this theorem to the $\C\D$-index of matroid base polytopes for rank $2$ matroids
   in Section~\ref{sec-rank-2-matroids}.
   
   Throughout this paper, we assume familiarity with the basic concepts of matroid theory.
   For further information, we refer the readers to \cite{Oxley}.
   
\section{Matroid base polytopes}
\label{sec-matroid-base-polytopes}

   This section contains the description of faces of matroid base polytopes.
   In particular, we associate a poset to each face of a matroid base polytope.
   
   We start with a precise characterization of matroid base polytopes.
   Let $\mathcal{B}$ be a collection of $r$-element subsets of $[n]$.
   For each subset $B = \{ b_1, \dots, b_r \}$, let
   $$
   e_B = e_{b_1} + \cdots + e_{b_r} \in \reals^n ,
   $$
   where $e_i$ is the $i$th standard basis vector of $\reals^n$.
   The collection $\mathcal{B}$ is represented by the convex hull of these points
   $$
   Q(\mathcal{B}) = \conv \{ e_B : B \in \mathcal{B} \}.
   $$
   This is a convex polytope of dimension $\le n-1$ 
   and is a subset of the $(n-1)$-simplex
   $$
   \DD_n = \{ (x_1, \dots, x_n) \in \reals^n : x_1 \ge 0, 
         \dots, x_n \ge 0, x_1 + \cdots + x_n = r \}.
   $$
   Gelfand, Goresky, MacPherson, and Serganova~\cite[Thm. 4.1]{GelfandGoreskyMacPhersonSerganova} 
   show the following characterization of matroid base polytopes.
   \begin{theorem}
   \label{thm-matroid-polytopes}
      $\mathcal{B}$ is the collection of bases of a matroid if and only if
      every edge of the polytope $Q(\mathcal{B})$ is parallel to
      a difference $e_\alpha - e_\beta$ of two distinct standard basis vectors.
   \end{theorem}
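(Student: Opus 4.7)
The plan is to prove the two directions separately, with the forward direction exploiting a ``parallelogram trick'' built from the symmetric exchange axiom, and the reverse direction using the local cone structure of $Q(\mathcal{B})$ at a vertex.

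For the forward direction, suppose $\mathcal{B}$ is the set of bases of a matroid $M$ and that $e_{B_1}, e_{B_2}$ are endpoints of an edge of $Q(\mathcal{B})$. I would argue by contrapositive: if $|B_1 \triangle B_2| > 2$, pick any $\alpha \in B_1 \setminus B_2$. By the symmetric basis exchange axiom (Brualdi), there exists $\beta \in B_2 \setminus B_1$ such that both $B_1' := (B_1 \setminus \alpha) \cup \beta$ and $B_2' := (B_2 \setminus \beta) \cup \alpha$ are bases. Then
\[
   e_{B_1} + e_{B_2} \;=\; e_{B_1'} + e_{B_2'},
\]
and the four vertices $e_{B_1}, e_{B_2}, e_{B_1'}, e_{B_2'}$ are pairwise distinct (since $|B_1 \triangle B_2| > 2$ forces $B_1' \ne B_2$ and $B_2' \ne B_1$). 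Hence the midpoint of $\overline{e_{B_1}e_{B_2}}$ lies in the relative interior of $\overline{e_{B_1'}e_{B_2'}}$, so the segment between $e_{B_1}$ and $e_{B_2}$ is a diagonal of a two-dimensional face rather than an edge, a contradiction. Therefore $|B_1 \triangle B_2| = 2$, and the edge is parallel to $e_\beta - e_\alpha$ for the unique swap.

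For the reverse direction, I would verify the basis exchange axiom directly. Fix $B_1, B_2 \in \mathcal{B}$ and $\alpha \in B_1 \setminus B_2$. The vector $e_{B_2} - e_{B_1}$ lies in the tangent cone of $Q(\mathcal{B})$ at the vertex $e_{B_1}$, so it is a nonnegative linear combination of the edge directions at $e_{B_1}$. By hypothesis, every such edge direction has the form $e_{\beta'} - e_{\alpha'}$ where $\alpha' \in B_1$, $\beta' \notin B_1$, and $(B_1 \setminus \alpha') \cup \beta' \in \mathcal{B}$. Writing
\[
   e_{B_2} - e_{B_1} \;=\; \sum_k \lambda_k \bigl(e_{\beta_k} - e_{\alpha_k}\bigr), \qquad \lambda_k \ge 0,
\]
and comparing the $\alpha$-coordinate on both sides gives $\sum_{k:\,\alpha_k = \alpha} \lambda_k = 1$, so some index $k_0$ has $\alpha_{k_0} = \alpha$ and $\lambda_{k_0} > 0$. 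Comparing the $\beta_{k_0}$-coordinate (noting $\beta_{k_0} \notin B_1$, so no $\alpha_\ell$ can equal $\beta_{k_0}$) forces $\beta_{k_0} \in B_2$. Thus $\beta_{k_0} \in B_2 \setminus B_1$ and $(B_1 \setminus \alpha) \cup \beta_{k_0} \in \mathcal{B}$, which is precisely the basis exchange property.

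I expect the reverse direction to be the main obstacle: the forward direction reduces cleanly to a single application of symmetric exchange, but the reverse requires recognizing that $e_{B_2} - e_{B_1}$ lies in the tangent cone at $e_{B_1}$ and carefully tracking coordinate-wise contributions to extract the exchange element $\beta$. The tangent-cone argument is the conceptual heart of the proof, and one must be careful that the combination uses only edges actually incident to $e_{B_1}$ (not arbitrary one-skeleton moves), which is guaranteed because any point of $Q(\mathcal{B})$ near $e_{B_1}$ is a nonnegative combination of $e_{B_1}$ plus edge-direction vectors from $e_{B_1}$.
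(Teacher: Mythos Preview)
The paper does not actually prove this theorem: it is quoted as \cite[Thm.~4.1]{GelfandGoreskyMacPhersonSerganova} and then used as background. So there is no ``paper's own proof'' to compare against.

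That said, your argument is a correct self-contained proof, and it follows the standard line of reasoning. A few minor comments. In the forward direction, your phrasing ``diagonal of a two-dimensional face'' is slightly loose---the four points $e_{B_1},e_{B_2},e_{B_1'},e_{B_2'}$ form a parallelogram inside $Q(\mathcal{B})$, but the smallest face containing them need not be two-dimensional. What you really need (and what your identity $e_{B_1}+e_{B_2}=e_{B_1'}+e_{B_2'}$ gives immediately) is that the midpoint of $\overline{e_{B_1}e_{B_2}}$ is a convex combination of two \emph{other} vertices, so any face containing $e_{B_1}$ and $e_{B_2}$ also contains $e_{B_1'}$ and $e_{B_2'}$ and hence is not an edge. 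In the reverse direction, your tangent-cone argument is clean and correct; the only fact you are using implicitly is that the feasible cone of a polytope at a vertex is generated (as a cone) by the edge directions at that vertex, which is standard. The coordinate bookkeeping that forces $\beta_{k_0}\in B_2$ is exactly right.
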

   
   For a rank $r$ matroid $M$ on a ground set $[n]$ 
   with a set of bases $\mathcal{B}(M)$,
   the polytope $Q(M) := Q(\mathcal{B}(M))$ is called 
   the \emph{matroid base polytope} of $M$.
   
   By the definition, the vertices of $Q(M)$ represent the bases of $M$.
   For two bases $B$ and $B'$ in $\mathcal{B}(M)$, 
   $e_B$ and $e_{B'}$ are connected by an edge if and only if 
   $e_B - e_{B'} = e_\alpha - e_\beta$ for some $\alpha, \beta \in [n]$.
   Since the latter condition is equivalent to $B \setminus B' = \{ \alpha \}$
   and $B' \setminus B = \{ \beta \}$, the edges of $Q(M)$ represent 
   the basis exchange axiom.
   The basis exchange axiom gives the following equivalence relation 
   on the ground set $[n]$ of the matroid $M$:
   $\alpha$ and $\beta$ are \emph{equivalent} if there exist bases $B$ and $B'$ in $\mathcal{B}(M)$
   with $B \setminus B' = \{ \alpha \}$ and $B' \setminus B = \{ \beta \}$.
   The equivalence classes are called the \emph{connected components} of $M$.
   The matroid $M$ is called \emph{connected} if it has only one connected
   component.
   Feichtner and Sturmfels~\cite[Prop. 2.4]{FeichtnerSturmfels} express the dimension of 
   the matroid base polytope $Q(M)$ in terms of the number of connected
   components of $M$.
   
   \begin{proposition}
   \label{dim-of-matroid-polytope}   
      Let $M$ be a matroid on $[n]$.
      The dimension of the matroid base polytope $Q(M)$ equals $n - c(M)$, where
      $c(M)$ is the number of connected components of $M$.
   \end{proposition}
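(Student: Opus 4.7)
I plan to establish matching upper and lower bounds $\dim Q(M)\le n-c$ and $\dim Q(M)\ge n-c$, where $c=c(M)$.

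For the upper bound, let $S_1,\dots,S_c$ be the connected components of $M$ and set $r_i=r(M|_{S_i})$, so that $r=r_1+\cdots+r_c$ and $n=|S_1|+\cdots+|S_c|$. Since $M$ is the direct sum of its restrictions to the $S_i$, a subset $B\subseteq[n]$ is a basis of $M$ if and only if each $B\cap S_i$ is a basis of $M|_{S_i}$; in particular $|B\cap S_i|=r_i$ for every $i$, so every vertex $e_B$ of $Q(M)$ satisfies the $c$ linear equations $\sum_{e\in S_i}x_e=r_i$. These equations are linearly independent because the $S_i$ are disjoint and nonempty, so the affine subspace they cut out of $\reals^n$ has dimension $n-c$. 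Hence $\dim Q(M)\le n-c$.

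For the lower bound I would exhibit $n-c$ linearly independent vectors in the linear translate $\mathrm{aff}(Q(M))-e_{B_0}$ for a fixed base $B_0$. By the definition of the connected components given just before this proposition, for any two elements $\alpha,\beta$ of a single component $S_i$ there is a chain $\alpha=\gamma_0,\gamma_1,\dots,\gamma_\ell=\beta$ in $S_i$ together with bases $B_j,B_j'\in\mathcal{B}(M)$ such that $e_{B_j}-e_{B_j'}=e_{\gamma_{j-1}}-e_{\gamma_j}$. Summing telescopically gives
\[
e_\alpha-e_\beta \;=\; \sum_{j=1}^{\ell}\bigl(e_{B_j}-e_{B_j'}\bigr),
\]
which lies in the linear span of $Q(M)-e_{B_0}$. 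Fixing one element $\beta_i\in S_i$ and letting $\alpha$ range over $S_i\setminus\{\beta_i\}$ yields $|S_i|-1$ linearly independent vectors of the form $e_\alpha-e_{\beta_i}$, and the vectors produced from different components are supported on disjoint coordinate blocks, so altogether one obtains $\sum_{i=1}^{c}(|S_i|-1)=n-c$ independent directions in the linear span of $Q(M)-e_{B_0}$.

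The main obstacle is the structural step in the lower bound: one needs the standard matroid-theoretic facts that the basis-exchange relation restricted to a single component is transitive (so the chain $\gamma_0,\dots,\gamma_\ell$ exists) and that $M$ genuinely decomposes as the direct sum $M|_{S_1}\oplus\cdots\oplus M|_{S_c}$ (used in the upper bound). Both are classical and are covered in \cite{Oxley}. With these granted, combining the two inequalities gives $\dim Q(M)=n-c(M)$. An equivalent route is to first observe that $Q(M)$ is affinely isomorphic to the Cartesian product $Q(M|_{S_1})\times\cdots\times Q(M|_{S_c})$, reducing the problem to the connected case, where the chain argument above directly yields $\dim Q(M)=n-1$.
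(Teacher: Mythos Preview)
The paper does not supply its own proof of this proposition; it is quoted directly from Feichtner and Sturmfels \cite[Prop.~2.4]{FeichtnerSturmfels}. Your argument is correct and is the standard one: the upper bound comes from the $c$ independent affine constraints $\sum_{e\in S_i}x_e=r_i$, and the lower bound from exhibiting the $n-c$ directions $e_\alpha-e_{\beta_i}$ inside the affine hull via chains of basis exchanges.

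One minor comment on the ``obstacles'' you flag. Since the paper \emph{defines} the connected components as the equivalence classes of the basis-exchange relation (see the paragraph immediately preceding the proposition), the existence of your chain $\gamma_0,\dots,\gamma_\ell$ is automatic from the definition of an equivalence class and requires no appeal to \cite{Oxley}. The only external matroid fact you genuinely need is that $M$ decomposes as the direct sum $M|_{S_1}\oplus\cdots\oplus M|_{S_c}$ over these classes, which you use for the upper bound; this is indeed classical (see, e.g., \cite[Chapter~4]{Oxley}). Your alternative route via $Q(M)\cong Q(M|_{S_1})\times\cdots\times Q(M|_{S_c})$ is also perfectly fine and is in fact noted later in the paper.
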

   
   Theorem~\ref{thm-matroid-polytopes} implies that every face of a matroid base polytope
   is also a matroid base polytope.
   For a face $\sigma$ of $Q(M)$, let $M_\sigma$ denote the matroid on $[n]$ 
   whose matroid base polytope is $\sigma$.
   For $\omega \in \reals^n$, let $M_\omega$ denote the matroid whose bases
   $\mathcal{B}(M_\omega)$ is the collection of bases of $M$ having minimum $\omega$-weight.
   Then $Q(M_\omega)$ is the face of $Q(M)$ at which the linear form 
   $\sum_{i=1}^n \omega_i x_i$ attains its minimum.
   Let $\flag(\omega)$ denote the unique flag of subsets
   $$
   \{ \emptyset =: S_0 \subseteq S_1 \subseteq \cdots \subseteq S_k \subseteq S_{k+1} := [n] \}
   $$
   for which $\omega$ is constant on each set $S_i \setminus S_{i-1}$
   and $\omega|_{S_i \setminus S_{i-1}} < \omega|_{S_{i+1} \setminus S_i}$.
   Ardila and Klivans~\cite{ArdilaKlivans}
   show that $M_\omega$ depends only on $\flag(\omega)$, and hence
   one can call it $M_{\flag}$.
   They also give the following description of $M_\flag$.
   
   \begin{proposition}
   \emph{\cite[Prop. 2]{ArdilaKlivans}}
   \label{bases-of-M-F}
      Let $M$ be a matroid on $[n]$ and $\flag$ be a flag of subsets
      $$
      \{ \emptyset =: S_0 \subseteq S_1 \subseteq \cdots \subseteq S_k \subseteq S_{k+1} := [n] \},
      $$
      then 
      $$
      M_{\flag} = \bigoplus_{i=1}^{k+1} (M |_{S_i}) / S_{i-1},
      $$
      where $M|_S$ is the restriction of $M$ on $S$ and $M/S$ is the contraction of $M$ on $S$.
   \end{proposition}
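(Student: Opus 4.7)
The plan is to show that the two matroids in the stated identity have the same collection of bases.

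First, I would unpack the right-hand side. Because bases of a direct sum are disjoint unions of bases of the summands, $B \subseteq [n]$ is a basis of $\bigoplus_{i=1}^{k+1} (M|_{S_i})/S_{i-1}$ if and only if it splits as a disjoint union $B = B_1 \cup \cdots \cup B_{k+1}$ with $B_i \subseteq S_i \setminus S_{i-1}$ a basis of $(M|_{S_i})/S_{i-1}$. Unwinding the definition of contraction and inducting on $i$, this is equivalent to the condition that $B_1 \cup \cdots \cup B_i$ is a basis of $M|_{S_i}$ for every $i$; in particular $B$ is a basis of $M$ and $|B \cap S_i| = r_M(S_i)$ for every $i$.

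Second, I would identify the $\omega$-minimizing bases. Fix any weight vector $\omega$ with $\flag(\omega) = \flag$ and let $c_i$ denote its constant value on $S_i \setminus S_{i-1}$, so that $c_1 < c_2 < \cdots < c_{k+1}$. Setting $t_i = |B \cap S_i|$ and applying summation by parts (using $|B| = r$),
$$
\sum_{e \in B} \omega_e \;=\; c_{k+1}\, r \;-\; \sum_{i=1}^{k} (c_{i+1} - c_i)\, t_i.
$$
Since each coefficient $c_{i+1} - c_i$ is positive, minimizing this expression over bases of $M$ is equivalent to maximizing every $t_i$ simultaneously. The bound $t_i \le r_M(S_i)$ is immediate because $B \cap S_i$ is independent in $M|_{S_i}$, and the first paragraph exhibits bases that attain equality for all $i$ at once. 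Hence the bases of $M_\flag$ are exactly the bases $B$ of $M$ with $|B \cap S_i| = r_M(S_i)$ for every $i$.

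Finally, I would reconcile the two characterizations. Given such a basis $B$ of $M$, set $B_i := B \cap (S_i \setminus S_{i-1})$; then $B \cap S_i = B_1 \cup \cdots \cup B_i$ is a basis of $M|_{S_i}$, being independent of the right size. By the definition of contraction, each $B_i$ is then a basis of $(M|_{S_i})/S_{i-1}$, so $B$ is a basis of the right-hand side. I expect the main obstacle to be the bookkeeping in the first paragraph: carefully translating between the iterated contraction--restriction description of the summands and the intersection condition $|B \cap S_i| = r_M(S_i)$. Once that translation is in place, the Abel-summation computation and the matching of characterizations are mechanical.
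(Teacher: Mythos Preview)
Your argument is correct. The identification of bases of the direct sum with bases $B$ of $M$ satisfying $|B\cap S_i|=r_M(S_i)$ for all $i$ is handled properly by the inductive translation through contraction, the Abel-summation computation is accurate, and the existence of a simultaneously maximizing basis (needed to conclude that the minimizers are \emph{exactly} those with $t_i=r_M(S_i)$) is supplied by the first step.

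There is nothing to compare against in the paper itself: this proposition is quoted from Ardila--Klivans \cite{ArdilaKlivans} without proof. Their original argument is essentially the same greedy/weight-minimization reasoning you give, so your approach matches the standard one.
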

   
   A flag 
   $\flag = \{ \emptyset =: S_0 \subseteq S_1 \subseteq 
   \cdots \subseteq S_k \subseteq S_{k+1} := [n] \}$ 
   is called \emph{factor-connected} (with respect to $M$) if the matroids
   $(M|_{S_{i}})/ S_{i-1}$ are connected for all $i = 1, \dots, k+1$.
   If $\flag$ is factor-connected, then the connected components of $M_\flag$ are
   $S_1 \setminus S_0, S_2 \setminus S_1, \dots, S_{k+1} \setminus S_k$.
   Proposition~\ref{dim-of-matroid-polytope} and Proposition~\ref{bases-of-M-F}
   show that the dimension of $Q(M_{\flag})$ is $n-k-1$ 
   when $\flag$ is factor-connected. 
   
   Since $Q(M_1 \oplus M_2) = Q(M_1) \times Q(M_2)$, it is enough to
   restrict to attention to connected matroid $M$.
   For a connected matroid $M$ on $[n]$, facets of $Q(M)$ correspond to factor-connected flags of the form
   $\{ \emptyset \subseteq S \subseteq [n] \}$.
   Feichtner and Sturmfels~\cite{FeichtnerSturmfels} show that there are two types of facets of $Q(M)$:
   \begin{enumerate}[(i)]
      \item
      a facet corresponding to a factor-connected flag $\{ \emptyset \subseteq F \subseteq [n] \}$ 
      for some flat $F$ of $M$ (in this case, the facet is called a \emph{flacet}),
      \item
      a facet corresponding to a factor-connected flag $\{ \emptyset \subseteq S \subseteq [n] \}$ 
      for an $(n-1)$-subset $S$ of $[n]$.
   \end{enumerate}
   
   %
   %
   \begin{lemma}
   \label{switching-components}
      Let $M$ be a connected matroid on $[n]$ and
      $$
      \flag = \{ \emptyset =: S_0 \subseteq S_1 \subseteq \cdots \subset
      S_k \subseteq S_{k+1} := [n] \}
      $$
      a factor-connected flag with respect to $M$.
      Then the matroid $(M|_{S_{j+1}})/S_{j-1}$ has at most two connected components
      for $1 \le j \le k$.
      \begin{enumerate}[(i)]
         \item
         If it has one connected component, the flag
         $$
         \mathcal{G} = \{ \emptyset =: S_0 \subseteq \cdots \subseteq S_{j-1}
         \subseteq S_{j+1} \subseteq \cdots \subseteq S_{k+1} := [n] \}
         $$
         is factor-connected and $Q(M_{\mathcal{G}})$ covers $Q(M_{\flag})$
         in the face lattice of $Q(M)$.
         \item
         If it has two connected components, then they are $S_j \setminus S_{j-1}$ and
         $S_{j+1} \setminus S_j$.
         Moreover, the flag
         $$
         \flag' = \{ \emptyset =: S_0 \subseteq \cdots \subseteq S_{j-1}
         \subseteq S'_j \subseteq S_{j+1} \subseteq \cdots \subseteq S_{k+1} := [n] \},
         $$
         where $S'_j = S_{j-1} \cup (S_{j+1} \setminus S_j)$, is factor-connected
         and $Q(M_{\flag'}) = Q(M_{\flag})$.
         In this case, $\flag$ and $\flag'$ are said to be \emph{adjacent}.
      \end{enumerate}
   \end{lemma}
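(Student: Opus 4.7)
The plan is to set $N := (M|_{S_{j+1}})/S_{j-1}$, a matroid on ground set $S_{j+1} \setminus S_{j-1}$, and reduce the lemma to a structural statement about $N$. By commuting deletion and contraction on disjoint sets, one checks that $N|_{S_j \setminus S_{j-1}} = (M|_{S_j})/S_{j-1}$ and $N/(S_j \setminus S_{j-1}) = (M|_{S_{j+1}})/S_j$; both are connected because $\flag$ is factor-connected. The lemma thus becomes: a matroid $N$ such that $N|_A$ and $N/A$ are both connected, for $A := S_j \setminus S_{j-1}$, has at most two connected components, and when there are two they are $A$ and $E(N) \setminus A$.

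The main obstacle is the bound on the number of components of $N$, which I will handle by combining two standard facts. First, the connected components of a matroid are the equivalence classes under ``lying on a common circuit,'' and since circuits of $N|_A$ are precisely the circuits of $N$ contained in $A$, connectedness of $N|_A$ forces $A$ to be contained in a single component $C_1$ of $N$. Writing $N = N|_{C_1} \oplus N|_{C_2} \oplus \cdots \oplus N|_{C_r}$, I then get $N/A = (N|_{C_1}/A) \oplus N|_{C_2} \oplus \cdots \oplus N|_{C_r}$. For $N/A$ to be connected, either $r = 1$ (the one-component case), or the summand $N|_{C_1}/A$ must be empty, forcing $A = C_1$ and $r = 2$ with $C_2 = S_{j+1} \setminus S_j$. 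This simultaneously proves the component count and identifies the components in case (ii).

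Case (i) is then routine: the only new minor appearing in $\mathcal{G}$ but not in $\flag$ is $(M|_{S_{j+1}})/S_{j-1} = N$, which is connected, so $\mathcal{G}$ is factor-connected. Propositions~\ref{dim-of-matroid-polytope} and~\ref{bases-of-M-F} give $\dim Q(M_{\mathcal{G}}) = \dim Q(M_{\flag}) + 1$, and since $\mathcal{G}$ is a coarsening of $\flag$ the linear-functional description of these faces yields $Q(M_{\flag}) \subseteq Q(M_{\mathcal{G}})$, hence the cover relation. For case (ii), analogous minor identities give $(M|_{S'_j})/S_{j-1} = N|_{S_{j+1} \setminus S_j}$ and $(M|_{S_{j+1}})/S'_j = N|_{S_j \setminus S_{j-1}}$, each of which is a component of $N$, hence connected, so $\flag'$ is factor-connected. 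Finally, comparing the decompositions from Proposition~\ref{bases-of-M-F}, both $M_{\flag}$ and $M_{\flag'}$ equal $\bigoplus_{i \neq j, j+1}\bigl((M|_{S_i})/S_{i-1}\bigr) \oplus N|_{S_j \setminus S_{j-1}} \oplus N|_{S_{j+1} \setminus S_j}$ as matroids on $[n]$ (only the labeling of the two new summands is swapped), so $Q(M_{\flag'}) = Q(M_{\flag})$.
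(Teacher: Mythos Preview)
Your proof is correct and follows essentially the same route as the paper's. The paper establishes the same two minor identities $(M|_{S_{j+1}})/S_j = N/(S_j\setminus S_{j-1})$ and $(M|_{S_j})/S_{j-1} = N|_{S_j\setminus S_{j-1}}$ for $N=(M|_{S_{j+1}})/S_{j-1}$, and then simply cites \cite[Propositions~4.2.10 and~4.2.13]{Oxley} together with Proposition~\ref{bases-of-M-F} for the conclusions; your argument is a self-contained unpacking of exactly those Oxley facts via the circuit description of components and the direct-sum decomposition $N/A=(N|_{C_1}/A)\oplus N|_{C_2}\oplus\cdots\oplus N|_{C_r}$.
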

   
   \begin{proof}
      For $j = 1, \dots, k$, we have
      $(M|_{S_{j+1}})/S_j = [(M|_{S_{j+1}})/S_{j-1}]/(S_j\setminus S_{j-1})$ 
      and $(M|_{S_j})/S_{j-1} = [(M|_{S_{j+1}})/S_{j-1}]|_{S_j}$.
      The first assertion is obtained from \cite[Proposition 4.2.10]{Oxley}, and
      the other assertions follow from \cite[Proposition 4.2.13]{Oxley}
      and Proposition~\ref{bases-of-M-F}.
   \end{proof}
   
   Two factor-connected flags $\flag$ and $\flag'$ of the same length are said to be \emph{equivalent} 
   if there is a sequence of factor-connected flags 
   $$
   \flag = \flag_0, \flag_1, \dots, \flag_r = \flag'
   $$ 
   such that $\flag_i$ is adjacent to $\flag_{i-1}$ for $i = 1, \dots, r$.
   We write $\flag \sim \flag'$ when factor-connected flags $\flag$ and $\flag'$ are equivalent.
   
   
   \begin{lemma}
   \label{switch-connected-case}
      Let $M$ be a connected matroid on $[n]$ and $X, Y$ disjoint subsets of $[n]$
      with $X \cup Y = [n]$.
      Then $\mathcal{B}((M|_X) \oplus (M/X))$ and $\mathcal{B}((M|_Y) \oplus (M/Y))$
      are disjoint.
   \end{lemma}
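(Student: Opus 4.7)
The plan is to reduce the disjointness statement to the absence of nontrivial separators in the connected matroid $M$. First I would give a clean description of each family of bases. Using the rank identity $r(M|_X) + r(M/X) = r$, any basis of $(M|_X) \oplus (M/X)$ has the form $B_X \sqcup B_Y$ with $B_X \subseteq X$ a basis of $M|_X$ and $B_Y \subseteq Y$ a basis of $M/X$. Unwinding the definitions, this says $|B_X| = r(X)$ and $B_X \cup B_Y$ is a basis of $M$. Hence $\mathcal{B}((M|_X) \oplus (M/X))$ is exactly the set of bases $B$ of $M$ with $|B \cap X| = r(X)$. By symmetry, $\mathcal{B}((M|_Y) \oplus (M/Y))$ is the set of bases $B$ of $M$ with $|B \cap Y| = r(Y)$.

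Next I would argue by contradiction. Suppose a single set $B$ lies in both families. Then $|B \cap X| = r(X)$ and $|B \cap Y| = r(Y)$; since $\{X, Y\}$ partitions $[n]$, summing yields $r = |B| = r(X) + r(Y)$. This is precisely the statement that $X$ is a separator of $M$. But $M$ is connected, so the only separators are $\emptyset$ and $[n]$; for a proper partition this is impossible, giving the desired disjointness. (The statement should be read with $X, Y$ both nontrivial, since otherwise one of the direct sums degenerates to $M$ itself and the conclusion fails vacuously.)

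The main obstacle, modest though it is, lies in the bookkeeping of the first step: one must verify that bases of the direct sum $(M|_X) \oplus (M/X)$ coincide on the nose with bases of $M$ that meet $X$ in a set of maximum possible size $r(X)$. This uses only the rank identity together with the standard description of bases of a contraction (namely, $B_Y$ is a basis of $M/X$ iff $B_X \cup B_Y$ is a basis of $M$ for some basis $B_X$ of $M|_X$). Once this dictionary is in place, the remainder is a one-line cardinality count feeding into the defining property of connectedness.
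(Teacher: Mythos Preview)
Your proof is correct and follows essentially the same approach as the paper: both arguments identify $\mathcal{B}((M|_X)\oplus(M/X))$ with the bases $B$ of $M$ satisfying $|B\cap X|=r(X)$, and then observe that a common base would force $r(X)+r(Y)=r(M)$, contradicting connectedness of $M$. The paper phrases this as first deducing $r(X)+r(Y)>r(M)$ and then concluding no base can lie in both families, while you run the contrapositive directly; your explicit remark about the implicit nontriviality of $X$ and $Y$ is a welcome clarification.
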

   
   \begin{proof}
      Since $X \cap Y = \emptyset$ and $X \cup Y = [n]$, 
      one has $r(X) + r(Y) \ge r(M)$.
      If $r(X) + r(Y) = r(M)$, then $|B \cap X| = r(X)$ and $|B \cap Y| = r(Y)$
      for all $B \in \mathcal{B}(M)$, and hence $M = M|_X \oplus M|_Y$.
      But then $M$ is not connected.
      Thus $r(X) + r(Y) > r(M)$.
      Therefore there is no base which has $r(X)$ elements in $X$ and $r(Y)$ elements in $Y$,
      i.e., $\mathcal{B}((M|_X) \oplus (M/X))$ and $\mathcal{B}((M|_Y) \oplus (M/Y))$
      are disjoint.
      %
   \end{proof}
   
   The following proposition shows that the equivalence classes of factor-connected
   flags characterize faces of a matroid base polytope.
   
   \begin{proposition}
   \label{equivalence-classes-of-fcf}
      Let $M$ be a connected matroid on $[n]$.
      If $\flag$ and $\flag'$ are two factor-connected flags of subsets of $[n]$ given by
      $$
      \begin{aligned}
         \flag =& \{ \emptyset =: S_0 \subseteq S_1 \subseteq \cdots 
                  \subseteq S_k \subseteq                  S_{k+1} := [n] \},\\
         \flag' =& \{ \emptyset =: T_0 \subseteq T_1 \subseteq \cdots 
                  \subseteq T_l \subseteq T_{l+1} := [n] \},
      \end{aligned}
      $$
      then $M_{\flag} = M_{\flag'}$ if and only if
      $\flag$ and $\flag'$ are equivalent.
   \end{proposition}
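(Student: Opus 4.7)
\emph{Proof proposal.}
The $(\Leftarrow)$ direction is immediate: adjacent flags yield the same matroid by Lemma~\ref{switching-components}(ii), so any chain of adjacencies preserves $M_\flag$.

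For the nontrivial direction $(\Rightarrow)$, I proceed by induction on $k$. First I record the preliminaries. Proposition~\ref{dim-of-matroid-polytope} together with the fact that a factor-connected flag of length $k+1$ yields a matroid with $k+1$ connected components forces $k=l$. By Proposition~\ref{bases-of-M-F} and the uniqueness of the decomposition of a matroid into connected summands, the set partitions $\{S_a\setminus S_{a-1}\}$ and $\{T_b\setminus T_{b-1}\}$ of $[n]$ coincide, and whenever $S_a\setminus S_{a-1} = T_b\setminus T_{b-1}$ equals a common component $C$, the corresponding summands $(M|_{S_a})/S_{a-1}$ and $(M|_{T_b})/T_{b-1}$ are equal (both equal $M_\flag|_C$). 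The base case $k=0$ is trivial.

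The crux of the inductive step is to bring the component $T_1$ to position $1$ in $\flag$ via adjacency moves, reducing to the case $S_1=T_1$. Write $T_1 = S_m\setminus S_{m-1}$, and suppose $m\ge 2$. I claim one may swap positions $m-1$ and $m$ by Lemma~\ref{switching-components}(ii), i.e., that $(M|_{S_m})/S_{m-2}$ has two connected components. Matching the $T_1$-summand gives $(M|_{S_m})/S_{m-1} = M|_{T_1}$, which translates into $r(Y\cup S_{m-1}) = r(Y)+r(S_{m-1})$ for every $Y\subseteq T_1$. Submodularity applied to $(Y\cup S_{m-2},\,S_{m-1})$ propagates this identity downward to $r(Y\cup S_{m-2}) = r(Y)+r(S_{m-2})$, and a second submodularity inequality applied to $(X\cup Y\cup S_{m-2},\,S_{m-1})$ with $X\subseteq S_{m-1}\setminus S_{m-2}$ yields the rank additivity
\[
r(X\cup Y\cup S_{m-2}) = r(X\cup S_{m-2}) + r(Y\cup S_{m-2}) - r(S_{m-2}),
\]
proving that $(M|_{S_m})/S_{m-2}$ splits as $(M|_{S_{m-1}})/S_{m-2}\oplus M|_{T_1}$. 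After the swap, the resulting flag $\flag^{(1)}$ satisfies $M_{\flag^{(1)}} = M_{\flag'}$ and has $T_1$ at position $m-1$; the analogous identity $(M|_{S^{(1)}_{m-1}})/S^{(1)}_{m-2}=M|_{T_1}$ persists, so the same rank argument validates the next swap. Iterating $m-1$ times produces $\flag^*\sim\flag$ with $S^*_1=T_1$.

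At this point $\flag^*$ and $\flag'$ agree at position $1$, and their tails, viewed as factor-connected flags in $M/T_1$ on $[n]\setminus T_1$, realize the same matroid. If $M/T_1$ is connected, the inductive hypothesis applies directly. In general, decompose $M/T_1 = N_1\oplus\cdots\oplus N_p$; each tail splits into sub-flags on the $N_i$, the inductive hypothesis gives equivalence within each $N_i$, and two adjacent components drawn from distinct $N_i$'s may be freely swapped, since the relevant matroid $(M|_{S_{a+1}})/S_{a-1}$ decomposes automatically along the direct-sum structure of $M/T_1$. Combining these equivalences produces $\flag\sim\flag^*\sim\flag'$. The main obstacle is the rank calculation establishing the two-component decomposition of $(M|_{S_m})/S_{m-2}$; the remainder of the argument is largely bookkeeping.
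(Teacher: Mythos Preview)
Your proof is correct and follows the same inductive strategy as the paper: identify $T_1$ as a consecutive difference $S_m\setminus S_{m-1}$ of $\flag$, then slide it leftward via adjacency moves enabled by a direct-sum decomposition. The differences are in execution. You obtain $T_1=S_m\setminus S_{m-1}$ immediately from the uniqueness of the connected decomposition of $M_\flag$, whereas the paper proves it by an explicit basis-exchange argument. For the key step justifying each swap, the paper shows $M|_{S_j\cup T_1}=M|_{S_j}\oplus M|_{T_1}$ by exhibiting a base of the correct size and invoking rank additivity, while you derive the two-component splitting of $(M|_{S_m})/S_{m-2}$ directly from submodularity together with the identity $(M|_{S_m})/S_{m-1}=M|_{T_1}$; these establish the same adjacency. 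The paper's recursion is organized more symmetrically---$T_1$ is moved to position~$2$ in $\flag$ and $S_1$ to position~$2$ in $\flag'$, so that both modified flags contain $S_1\cup T_1$---whereas you push $T_1$ all the way to position~$1$ and recurse in $M/T_1$. Your explicit treatment of the case where $M/T_1$ is disconnected (splitting into the $N_i$, applying the hypothesis in each, and interleaving via free cross-block swaps) addresses a point the paper leaves implicit in its appeals to the induction hypothesis.
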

   
   \begin{proof}
      If $\flag \sim \flag'$, then $M_\flag = M_{\flag'}$
      from Lemma~\ref{switching-components}.
      
      For the other direction, suppose that $M_{\flag} = M_{\flag'}$.
      Then $\flag$ and $\flag'$ have the same length since $\dim Q(M_{\flag}) = n-k-1$ 
      and $\dim Q(M_{\flag'}) = n-l-1$.
      Without loss of generality, we may assume that $S_1 \ne T_1$ and $S_k \ne T_k$.
      
      We will use induction on $k$.
      We claim that
      \begin{eqnarray}
      \label{difference-are-the-same}
         T_1 = S_m \setminus S_{m-1} \text{ for some } m > 1.
      \end{eqnarray}
      This claim is going to be used both in the base case and the inductive step.
      Let $m$ be the smallest index such that $T_1 \cap S_m \ne \emptyset$
      and let $\alpha \in T_1 \cap S_m$.
      Suppose that $T_1$ is not contained in $S_m$, i.e.,
      there is an element $\beta \in T_1 \setminus S_m$.
      Since $M|_{T_1}$ is connected, there are
      $B'_1, \widetilde{B}'_1 \in \mathcal{B}(M|_{T_1})$ such that $\alpha \in B'_1$ and
      $\widetilde{B}'_1 = (B'_1 \setminus \{ \alpha \}) \cup \{ \beta \}$.
      Choose $B'_j \in \mathcal{B}((M|_{T_j})/T_{j-1})$ for $j = 2, \dots, k+1$.
      Then 
         $B' = B'_1 \cup B'_2 \cup \cdots \cup B'_{k+1}$ 
         and
         $\widetilde{B}' = \widetilde{B}'_1 \cup B'_2 \cup \cdots \cup B'_{k+1}$ 
      are bases of $M_{\flag'}$.
      But either $B'$ or $\widetilde{B}'$ is not a base of $M_{\flag}$ since
      $$
      |\widetilde{B}' \cap (S_m \setminus S_{m-1})| = |B' \cap (S_m \setminus S_{m-1})|-1.
      $$
      This contradicts the assumption $M_\flag = M_{\flag'}$.
      Therefore $T_1 \subseteq S_m$.

      Now suppose $(S_m \setminus S_{m-1}) \setminus T_1 \ne \emptyset$, i.e.,
      there is $\gamma \in (S_m \setminus S_{m-1}) \setminus T_1$.
      Since $(M|_{S_m})/S_{m-1}$ is connected, there are
      bases $B_m , \widetilde{B}_m$ of $(M|_{S_m})/S_{m-1}$ satisfying
      $\alpha \in B_m$ and $\widetilde{B}_m = (B_m \setminus \{ \alpha \}) \cup \{ \gamma \}$.
      If $B_j \in \mathcal{B}((M|_{S_j})/S_{j-1})$ for all $j \ne m$,
      then 
      $$
      \begin{aligned}
         B &= B_1 \cup \cdots \cup B_{k+1} 
         \textrm{ and}\\
         \widetilde{B} &= B_1 \cup \cdots \cup B_{m-1} \cup \widetilde{B}_m \cup B_{m+1} \cup \cdots \cup B_{k+1}
      \end{aligned}
      $$
      are contained in $\mathcal{B}(M_\flag)$.
      But at least one of them is not in $\mathcal{B} (M_{\flag'})$ because
      $|\widetilde{B} \cap T_1| = |B \cap T_1|-1$.
      This is impossible because $M_\flag = M_{\flag'}$.
      Therefore 
      $T_1 = S_m \setminus S_{m-1}$.
         
      \emph{Base case}: $k = 1$. 
      Equation~(\ref{difference-are-the-same}) shows that $T_1 = S_2 \setminus S_1$. 
      Since $T_1 \cup S_1 = [n]$,  
      $M$ is not connected by Lemma~\ref{switch-connected-case}.
      Thus, $\flag$ and $\flag'$ are equivalent by Lemma~\ref{switching-components}(ii).
      
      \emph{Inductive step}.
      Now suppose $k > 1$.
      By Equation~(\ref{difference-are-the-same}), $T_1 = S_m \setminus S_{m-1}$ for some $m > 1$.
      Let $\widetilde{\flag}$ be a flag
      $$
      \emptyset =: S_0 \subseteq S_1
      \subseteq S_1 \cup T_1 \subseteq S_2 \cup T_1 \subseteq \cdots
      \subseteq \underbrace{S_{m-1} \cup T_1}_{= S_m} \subseteq S_{m+1} \cdots \subseteq S_{k+1} := [n].
      $$
      We claim that $M|_{S_j \cup T_1} = M|_{S_j} \oplus M|_{T_1}$ for $j = 1, \dots, m-1$.
      Let $B_j \in \mathcal{B}(M|_{S_j})$ and $B'_1 \in \mathcal{B}(M|_{T_1})$.
      Since 
      $$
      M_\flag = \oplus_{i=1}^{k+1} (M|_{S_i})/S_{i-1} 
      = \oplus_{i=1}^{k+1} (M|_{T_i})/T_{i-1} = M_{\flag'}
      $$ 
      and $M|_{T_1} = (M|_{S_m})/S_{m-1}$,
      there is a base $B \in \mathcal{B}(M_\flag)$ such that $B \cap S_j = B_j$
      and $B \cap T_1 = B'_1$. 
      Thus $B_j \cup B'_1 \in \mathcal{B}(M|_{S_j \cup T_1})$,
      and hence 
      $$
      r(M|_{S_j \cup T_1}) = r(M|_{S_j}) + r(M|_{T_1}).
      $$
      By \cite[Lemma 3]{MartinReiner}, we have
      $M|_{S_j \cup T_1} = M|_{S_j} \oplus M|_{T_1}$.
      Therefore
      $$
      \begin{aligned}
         (M|_{S_1 \cup T_1})/S_1 &= [M|_{T_1} \oplus M|_{S_1}]/S_1\\
                                 &= (M|_{S_m})/S_{m-1}
      \end{aligned}
      $$
      and
      $$
      \begin{aligned}
         (M|_{S_i \cup T_1})/(S_{i-1} \cup T_1) 
         &= [M|_{T_1} \oplus M|_{S_i}]/(S_{i-1} \cup T_1))\\
         &= (M|_{S_i})/S_{i-1}
      \end{aligned}
      $$
      for $i = 2, \dots, m-1$.
      Thus $\widetilde{\flag}$ is factor-connected and $M_{\widetilde{\flag}} = M_{\flag}$.
      By the induction assumption, $\widetilde{\flag} \sim \flag$.
      Similarly, one can show $S_1 = T_l \setminus T_{l-1}$ for some $l > 1$
      and the chain $\widetilde{\flag}'$ given by
      $$
      \emptyset =: T_0 \subseteq T_1
      \subseteq S_1 \cup T_1 \subseteq S_1 \cup T_2 \subseteq \cdots
      \subseteq \underbrace {S_1 \cup T_{l-1}}_{= T_l} \subseteq T_{l+1} \cdots \subseteq T_{k+1} := [n]
      $$
      is factor-connected and $M_{\widetilde{\flag}'} = M_{\flag'}$.
      Thus the induction assumption gives $\widetilde{\flag}' \sim \flag'$.
      
      By the induction assumption again, we have $\widetilde{\flag} \sim \widetilde{\flag}'$
      and hence $\flag$ and $\flag'$ are equivalent.
   \end{proof}
   
   If $M$ is a matroid on $[n]$ and $S$ is a subset of $[n]$, 
   then the hyperplane $H_S$ defined by $\sum_{e \in S} x_e = r(S)$
   is a supporting hyperplane of $Q(M)$ and $Q(M) \cap H_S$ is
   the matroid base polytope for $(M|_S) \oplus (M/S)$.
   The next lemma tells us when a face of $Q(M)$ is contained in $H_S$.
   
   \begin{lemma}
   \label{lem-set-in-flag}
      Let $M$ be a matroid on $[n]$ and $S$ be a subset of $[n]$.
      A face $\sigma$ of $Q(M)$ is contained in $H_S$ if and only if
      there is a factor-connected flag
      $$
      \flag = \{ \emptyset =: S_0 \subseteq S_1 \subseteq \cdots 
              \subseteq S_k \subseteq S_{k+1} := [n] \}
      $$
      containing $S$ and $\sigma = Q(M_\flag)$.
   \end{lemma}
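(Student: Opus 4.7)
The plan is to handle the two directions separately: sufficiency reduces to a telescoping rank computation using Proposition~\ref{bases-of-M-F}, while necessity reduces to a product decomposition of $Q(M) \cap H_S$ as a polytope.

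For the ``if'' direction, I assume $\flag = \{\emptyset =: S_0 \subseteq S_1 \subseteq \cdots \subseteq S_{k+1} := [n]\}$ is factor-connected and $S = S_j$ for some $j$. By Proposition~\ref{bases-of-M-F}, every basis $B$ of $M_\flag$ is a disjoint union $B_1 \cup \cdots \cup B_{k+1}$ with $B_i$ a basis of $(M|_{S_i})/S_{i-1}$, hence $|B_i| = r(S_i) - r(S_{i-1})$. Telescoping then gives $|B \cap S| = \sum_{i=1}^{j} |B_i| = r(S_j) = r(S)$, so every vertex $e_B$ of $\sigma = Q(M_\flag)$ lies on $H_S$.

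For the ``only if'' direction, since $H_S$ is a supporting hyperplane of $Q(M)$ with $Q(M) \cap H_S = Q(N)$ for $N := (M|_S) \oplus (M/S)$, the face $\sigma \subseteq H_S$ is also a face of $Q(N)$. Now $Q(N) = Q(M|_S) \times Q(M/S)$ when the factors are identified with the coordinate subspaces $\reals^S$ and $\reals^{S^c}$, and faces of a product polytope are products of faces, so I can write $\sigma = \sigma_1 \times \sigma_2$ with $\sigma_1 = Q((M|_S)_{\mathcal{G}_1})$ and $\sigma_2 = Q((M/S)_{\mathcal{G}_2})$ for factor-connected flags
$$
\mathcal{G}_1 = \{ \emptyset =: U_0 \subseteq U_1 \subseteq \cdots \subseteq U_p := S \}
\quad\text{and}\quad
\mathcal{G}_2 = \{ \emptyset =: V_0 \subseteq V_1 \subseteq \cdots \subseteq V_q := S^c \}
$$
with respect to $M|_S$ and $M/S$ respectively. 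In particular, $M_\sigma = (M|_S)_{\mathcal{G}_1} \oplus (M/S)_{\mathcal{G}_2}$.

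I then concatenate to form
$$
\flag = \{ U_0 \subseteq U_1 \subseteq \cdots \subseteq U_p \subseteq U_p \cup V_1 \subseteq \cdots \subseteq U_p \cup V_q = [n] \},
$$
which contains $S = U_p$. For the first $p$ terms, $(M|_{U_i})/U_{i-1} = ((M|_S)|_{U_i})/U_{i-1}$ since $U_i \subseteq S$, and this is connected by factor-connectedness of $\mathcal{G}_1$. For the remaining terms, I invoke the standard identity $(M|_{A \cup B})/A = (M/A)|_B$ for disjoint $A$ and $B$ (see \cite{Oxley}) to obtain
$$
(M|_{S \cup V_j})/(S \cup V_{j-1}) = ((M/S)|_{V_j})/V_{j-1},
$$
which is connected by factor-connectedness of $\mathcal{G}_2$. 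Assembling all summands gives $M_\flag = (M|_S)_{\mathcal{G}_1} \oplus (M/S)_{\mathcal{G}_2} = M_\sigma$, so $\sigma = Q(M_\flag)$ as required. I expect the main technical point to be the correct application of the restriction-contraction commutation identity for the later terms; once that is in hand, the remainder is assembly.
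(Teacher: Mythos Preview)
Your proof is correct, but the ``only if'' direction takes a different route from the paper. The paper starts with an arbitrary factor-connected flag $\flag' = \{T_0 \subseteq \cdots \subseteq T_{k+1}\}$ with $\sigma = Q(M_{\flag'})$, then uses connectedness of each $(M|_{T_i})/T_{i-1}$ to argue that every layer $T_i \setminus T_{i-1}$ lies entirely in $S$ or entirely in $S^c$ (otherwise one could exchange one element for another across $S$ and violate $\sigma \subseteq H_S$); it then reorders the layers so that those contained in $S$ come first, producing a flag through $S$. Your approach instead exploits the product decomposition $Q(M)\cap H_S = Q(M|_S)\times Q(M/S)$ together with the fact that faces of a product are products of faces, obtaining factor-connected flags in $M|_S$ and $M/S$ separately and concatenating them. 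Your argument is cleaner and more geometric, and the restriction--contraction identity handles the verification of factor-connectedness of the concatenated flag exactly as you indicate. The paper's argument, on the other hand, yields slightly more information: it shows directly that the connected components of $M_\sigma$ are each contained in $S$ or in $S^c$, a fact that feeds into the later description of $L_\sigma$ and $P_\sigma$.
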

   
   \begin{proof}
      If there is a flag $\flag$ containing $S$ and $\sigma = Q(M_\flag)$,
      then every base of $M_\flag$ is contained in $\mathcal{B}((M|_S) \oplus (M/S))$
      and hence $\sigma$ is contained in $H_S$.
      
      For the converse, suppose $\sigma$ is contained in $H_S$.
      Since $\sigma$ is a face of $Q(M)$, there is a factor-connected flag 
      $$
      \flag' = \{ \emptyset =: T_0 \subseteq T_1 \subseteq \cdots 
               \subseteq T_k \subseteq T_{k+1} := [n] \},
      $$
      such that $\sigma = Q (M_{\flag'})$ and $k = n-\dim \sigma -1$.
      We claim that
      $T_i \setminus T_{i-1}$ is contained in either $S$ or $[n]\setminus S$ for $i = 1, \dots, k+1$.
      Suppose there are $\alpha \in (T_i \setminus T_{i-1}) \cap S$ and $\beta \in (T_i \setminus T_{i-1}) \setminus S$.
      Since $(M|_{T_i})/T_{i-1}$ is connected, there exist bases $B_1$ and $B_2$ of $M_{\flag'}$
      with $\alpha \in B_1$ and $B_2 = (B_1 \setminus \{ \alpha \}) \cup \{ \beta \}$.
      Then $|B_1 \cap S| > |B_2 \cap S|$ and so $\sigma$ is not contained in $H_S$, which is a contradiction.
      
      Construct a flag $\flag$ as follows:
      $S_j = S_{j-1} \cup (T_l \setminus T_{l-1})$, where $l$ is the smallest index such that 
      \begin{enumerate}[(i)]
         \item
         $T_l \setminus T_{l-1} \subseteq S \setminus S_{j-1}$ if $S \setminus S_{j-1} \ne \emptyset$,
         \item
         $T_l \setminus T_{l-1} \subseteq [n]\setminus S_{j-1}$ if $S \setminus S_{j-1} = \emptyset$.
      \end{enumerate}
      By the construction of $\flag$, $S$ is contained in $\flag$.
      Since $\sigma$ is contained in $H_S$, 
      $$
      M_\flag = \oplus_{i=1}^{k+1} (M|_{S_i})/S_{i-1} = \oplus_{i=1}^{k+1} (M|_{T_i})/T_{i-1} =  M_{\flag'}
      $$ 
      and hence $\sigma = Q(M_{\flag})$.
   \end{proof}
   
   For a face $\sigma$ of $Q(M)$, let $L_\sigma$ be the poset of all subsets of $[n]$ 
   which are contained in some factor-connected flag $\flag$ with $\sigma = Q(M_\flag)$ 
   ordered by inclusion.
   The following lemma shows that $L_\sigma$ is a lattice.
   
   \begin{lemma}
   \label{lem-closed-under-intersection}
      Let $M$ be a matroid on $[n]$ and $S, T \subseteq [n]$.
      If $Q(M)$ is contained in $H_S$ and $H_T$, then it is also contained in $H_{S \cap T}$.
   \end{lemma}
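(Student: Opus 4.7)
The plan is to reformulate containment in $H_X$ as a condition on bases and then exploit submodularity of the rank function. Since the vertices of $Q(M)$ are precisely the incidence vectors $e_B$ of the bases $B \in \mathcal{B}(M)$, and since a linear functional is constant on $Q(M)$ exactly when it takes the same value at every vertex, the inclusion $Q(M) \subseteq H_X$ is equivalent to $|B \cap X| = r(X)$ for every basis $B$. So the hypothesis says that for every $B \in \mathcal{B}(M)$,
\[
|B \cap S| = r(S) \quad \text{and} \quad |B \cap T| = r(T),
\]
and the conclusion to prove is $|B \cap (S \cap T)| = r(S \cap T)$ for every such $B$.

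I would then fix a basis $B$ and apply ordinary set-theoretic inclusion--exclusion to the subsets $B \cap S$ and $B \cap T$:
\[
|B \cap S| + |B \cap T| = |B \cap (S \cup T)| + |B \cap (S \cap T)|.
\]
The left-hand side equals $r(S) + r(T)$ by hypothesis. The right-hand side is at most $r(S \cup T) + r(S \cap T)$, because each $B \cap X$ is an independent subset of $X$ and so has cardinality at most $r(X)$. On the other hand, submodularity of the matroid rank function gives
\[
r(S \cup T) + r(S \cap T) \le r(S) + r(T).
\]
Squeezing these inequalities forces equality throughout; in particular $|B \cap (S \cap T)| = r(S \cap T)$, and since $B$ was arbitrary this yields $Q(M) \subseteq H_{S \cap T}$.

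There is not really a hard step here: the argument is a two-line squeeze once the hyperplane condition is recast as a statement about every basis meeting $S$ and $T$ maximally. The only ingredient beyond linearity of the defining functionals is submodularity of the rank function, which is standard. One could equivalently observe that $Q(M) \subseteq H_S$ is the same as saying $S$ is a separator of $M$, and that separators (being unions of connected components) are closed under intersection; but the submodularity approach avoids this detour and gives the cleanest proof.
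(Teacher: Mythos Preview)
Your proof is correct and essentially the same as the paper's: both combine inclusion--exclusion on $|B\cap S|,\,|B\cap T|$ with the bounds $|B\cap X|\le r(X)$ and submodularity of the rank function. The only cosmetic difference is that the paper first records the modular equality $r(S\cup T)+r(S\cap T)=r(S)+r(T)$ (citing Oxley) and then derives a contradiction from $|B\cap(S\cap T)|<r(S\cap T)$, whereas you squeeze directly to force both $|B\cap(S\cap T)|=r(S\cap T)$ and $|B\cap(S\cup T)|=r(S\cup T)$.
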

   
   \begin{proof}
      Since $Q(M)$ is contained in $H_S$ and $H_T$, we have
      $$
      r(S \cup T) + r(S \cap T) = r(S) + r(T)
      $$ 
      (see \cite[Lemma 1.3.1]{Oxley}).
      Suppose that $Q(M)$ is not contained in $H_{S \cap T}$, i.e., there is a base $B$
      of $M$ such that $|B \cap (S \cap T)| < r(S \cap T)$.
      Then
      $$
      \begin{aligned}
         |B \cap (S \cup T)| &= |B \cap S| + |B \cap T| - |B \cap (S \cap T)|\\
                             &> r(S) + r(T) - r(S \cap T) = r(S \cup T)
      \end{aligned}
      $$
      which is impossible.
      Therefore $Q(M)$ is contained in $H_{S \cap T}$.
   \end{proof}
   %
   
   Since $L_\sigma$ is a sublattice of the Boolean lattice $\mathcal{B}_n$,
   it is distributive.
   The fundamental theorem for finite distributive lattices~\cite{StanleyI} 
   shows that there is a finite poset $P_\sigma$
   for which $L_\sigma$ is the lattice of order ideals of $P_\sigma$.
   The following theorem describes the poset $P_\sigma$ in terms of matroid data.
   
   \begin{theorem}
   \label{theorem-posets-for-faces-of-matroid-polytopes}
      Let $M$ be a matroid on $[n]$ and $\sigma$ be a face of $Q(M)$.
      Then $L_\sigma$ is the lattice of order ideals of $P_\sigma$, 
      where $P_\sigma$ is the poset defined as follows:
      \begin{enumerate}[(i)]
         \item
         The elements of $P_\sigma$ are the connected components of $M_\sigma$, and
         \item
         for distinct connected components $C_1$ and $C_2$ of $M_\sigma$,
         $C_1 < C_2$ if and only if 
         $$
         C_2 \subseteq S \subseteq [n] \text{ and }
         \sigma \subseteq H_S \text{ implies } C_1 \subseteq S.
         $$
      \end{enumerate}
   \end{theorem}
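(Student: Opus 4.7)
The plan is to establish the equality $L_\sigma = J(P_\sigma)$ under the natural identification sending $S \in L_\sigma$ to the set $\{C : C \subseteq S\}$ of connected components of $M_\sigma$ contained in $S$. By Lemma~\ref{lem-set-in-flag} I may equivalently view $L_\sigma$ as $\{S \subseteq [n] : \sigma \subseteq H_S\}$. First I would verify that $L_\sigma$ is a sublattice of $\mathcal{B}_n$: closure under intersection is Lemma~\ref{lem-closed-under-intersection} applied to $M_\sigma$, and closure under union follows from the same inclusion--exclusion argument, since for $S, T \in L_\sigma$ and any basis $B$ of $M_\sigma$ the identity $|B \cap (S \cap T)| + |B \cap (S \cup T)| = r(S) + r(T)$ together with submodularity and the trivial upper bounds $|B \cap (S \cap T)| \leq r(S \cap T)$ and $|B \cap (S \cup T)| \leq r(S \cup T)$ forces equality in both summands.

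Next I would establish the rigidity statement that every $S \in L_\sigma$ is a union of connected components of $M_\sigma$. Because $M_\sigma$ is the direct sum of its components, its bases split as disjoint unions indexed by the components, so the claim reduces to arguing one component at a time: if $C$ is a connected component of $M_\sigma$ with $\emptyset \neq C \cap S \neq C$, then connectivity of $M_\sigma|_C$ and the basis exchange axiom produce two bases of $M_\sigma$ that differ by a single swap inside $C$ moving an element in or out of $S$, contradicting the condition $|B \cap S| = r(S)$ for every basis $B$ of $M_\sigma$. Given this, one direction of the theorem is immediate: if $S \in L_\sigma$ and $C_2 \subseteq S$ with $C_1 < C_2$ in $P_\sigma$, then the very definition of the order, applied with this $S$, yields $C_1 \subseteq S$, so the set of components contained in $S$ is an order ideal.

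For the opposite inclusion I would introduce, for each connected component $C$ of $M_\sigma$, the set $\widetilde{C} := \bigcap \{S \in L_\sigma : C \subseteq S\}$. By intersection-closure $\widetilde{C} \in L_\sigma$, so it is the unique minimum element of $L_\sigma$ containing $C$, and by the rigidity step it is a union of connected components. Unfolding the definition of the order on $P_\sigma$, a component $C'$ lies in $\widetilde{C}$ if and only if it lies in every $S \in L_\sigma$ containing $C$, which is exactly the condition $C' \leq C$; thus $\widetilde{C}$ realizes the principal order ideal generated by $C$. Given any order ideal $I$ of $P_\sigma$ and $S = \bigcup_{C \in I} C$, the fact that $I$ contains the principal ideal of each of its elements gives $\widetilde{C} \subseteq S$ for all $C \in I$, whence $S = \bigcup_{C \in I} \widetilde{C}$ lies in $L_\sigma$ by union-closure.

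I expect the main obstacle to be the rigidity statement itself: the earlier lemmas of the section treat the analogous claim for pairs of supporting hyperplanes but do not explicitly record that every element of $L_\sigma$ must decompose along the connected components of $M_\sigma$. Once this is in hand, the rest of the argument is the formal dictionary supplied by the fundamental theorem of finite distributive lattices, together with the two closure properties of the first paragraph.
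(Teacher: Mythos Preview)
Your proposal is correct, and the overall shape matches the paper's: both directions, both relying on Lemma~\ref{lem-set-in-flag} to identify $L_\sigma$ with $\{S:\sigma\subseteq H_S\}$ and on Lemma~\ref{lem-closed-under-intersection} for meets. The details, however, diverge. For the forward direction the paper reads the component decomposition of $S$ directly off a factor-connected flag through $S$ (via Lemma~\ref{lem-set-in-flag}), whereas you isolate this as a separate ``rigidity'' step proved by a bare basis-exchange argument; both are fine, and yours avoids re-invoking the flag machinery. The genuine difference is in the reverse direction. The paper never proves union-closure; instead, for an order ideal $T$ it takes $U=\bigcap\{\widetilde T\in L_\sigma:T\subseteq\widetilde T\}$ and argues $T=U$ by contradiction, picking a factor-connected flag through $U$ with $U_j=U$ and $U_j\setminus U_{j-1}\not\subseteq T$, so that $U_{j-1}$ is a smaller element of $L_\sigma$ containing $T$. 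Your route---prove union-closure by the same submodularity/inclusion--exclusion count, identify each $\widetilde C$ with the principal ideal below $C$, and assemble an arbitrary ideal as a union of principal ones---is more lattice-theoretic and sidesteps the flag argument entirely. What you gain is a self-contained proof that also pins down the sublattice claim the paper asserts just before the theorem (the paper only records intersection-closure there); what the paper's argument gains is that it never needs union-closure at all, only the minimality of $U$. One small wording point: your citation of Lemma~\ref{lem-closed-under-intersection} ``applied to $M_\sigma$'' is slightly off, since $H_S$ is defined with $r_M$, not $r_{M_\sigma}$---but your explicit inclusion--exclusion argument (which works verbatim with $r_M$) already handles both closure properties directly, so nothing is actually missing.
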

   
   Note that $P_\sigma$ is a well-defined poset.
   Reflexivity and transitivity are clear.
   Suppose $C_1$ and $C_2$ are distinct connected components of $M_\sigma$ with $C_1 < C_2$.
   Consider a minimal subset $S$ such that $C_2 \subseteq S$ and $\sigma \subseteq H_S$.
   $C_1 < C_2$ implies $C_1$ is contained in $S$. Since $C_2$ is a connected component of $M_\sigma$, 
   Lemma~\ref{lem-set-in-flag} implies that there is a flag containing $S$ and $S \setminus C_2$
   such that $Q(M_\flag) = \sigma$.
   Since $C_1 \subseteq S \setminus C_2$, $\sigma \subseteq H_{S \setminus C_2}$,
   and $C_2 \nsubseteq S \setminus C_2$, we have $C_2 \nless C_1$.
   
   \begin{example}
      Let $M_{2,1,1}$ be the rank $2$ matroid on $[4] = \{ 1, 2, 3, 4 \}$ 
      whose unique non-base is  $12$ and let $\sigma$ be an edge of $Q(M_{2,1,1})$
      connecting $e_{14}$ and $e_{24}$.
      Then the connected components of $M_\sigma$ are $12$, $3$ and $4$.
      Since $\{ 1, 2, 3, 4 \}$ is the only subset $S$ containing $\{ 3 \}$
      such that $\sigma \subseteq H_S$, $12 < 3$ and $4 < 3$ in $P_\sigma$.
      One can see that there are no other relations in $P_\sigma$.
      Figure~\ref{fig-Face-poset} is the proper part of the face poset of $Q(M_{2,1,1})$
      whose faces (shown on the right-hand side of each box) are labeled by corresponding posets 
      (shown on the left-hand side of each box) and $P_\sigma$ is shown in the shaded box.
   \end{example}
   
   \begin{figure}[!t]
      \begin{center}
         \includegraphics[width=0.9\textwidth]{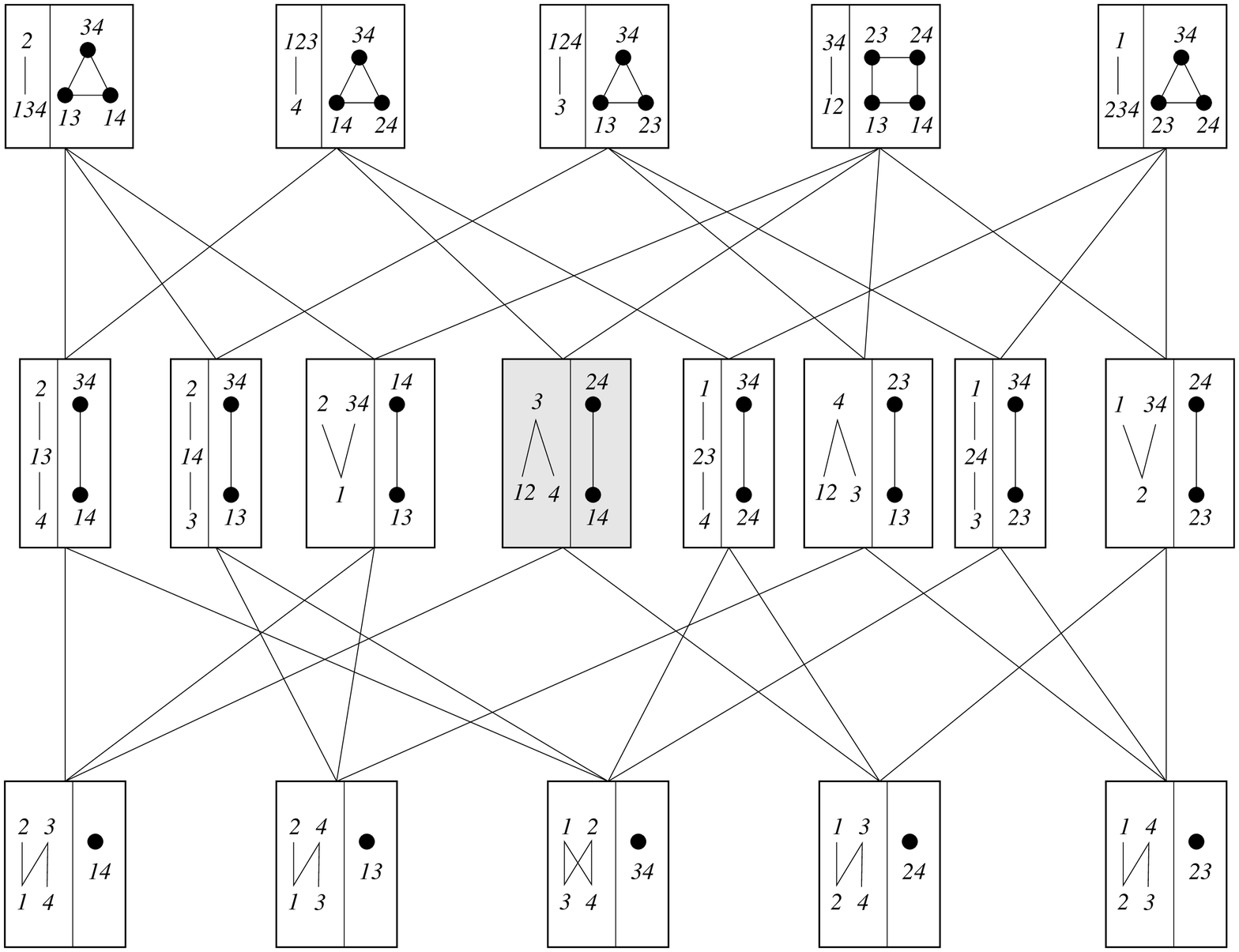}
      
         \begin{caption}
            {The proper part of the face poset of $Q(M_{2,1,1})$}
            \label{fig-Face-poset}
         \end{caption}
      \end{center}
   \end{figure}
   
   \begin{proof}[Proof of Theorem~\ref{theorem-posets-for-faces-of-matroid-polytopes}]
      Let $S$ be a set in $L_\sigma$.
      By Lemma~\ref{lem-set-in-flag}, there is a factor-connected flag
      $$
      \flag = \{ \emptyset =: S_0 \subseteq S_1 \subseteq \cdots 
              \subseteq S_k \subseteq S_{k+1} := [n] \}
      $$
      such that $M_\sigma = M_\flag$ and $S_i = S$ for some $i$.
      Then $S = \cup_{j=1}^i (S_j \setminus S_{j-1})$.
      Suppose $S_l \setminus S_{l-1} < S_j \setminus S_{j-1}$ in $P_\sigma$ for some $j \le i$.
      Since $\sigma \subseteq H_S$, $S_l \setminus S_{l-1} \subseteq S$ and 
      hence $S$ is an order ideal of $P_\sigma$.
      
      Conversely, suppose $T$ is an order ideal of $P_\sigma$.
      Let $U$ be the intersection of all subsets $\widetilde{T}$ such that $\sigma \subseteq H_{\widetilde{T}}$
      and $T \subseteq \widetilde{T}$.
      By Lemma~\ref{lem-closed-under-intersection}, $U$ lies in $L_\sigma$.
      Suppose $T \ne U$.
      By Lemma~\ref{lem-set-in-flag}, there is a factor-connected flag
      $$
      \flag = \{ \emptyset =: U_0 \subseteq U_1 \subseteq \dots
              \subseteq U_k \subseteq U_{k+1} := [n] \}
      $$
      such that $\sigma = Q(M_\flag)$ and $U = U_j$ for some $j$.
      Since $T \ne U$ and $T$ is an order ideal, 
      we may choose $\flag$ so that $U_j \setminus U_{j-1} \nsubseteq T$.
      Then $U_{j-1} \in L_\sigma$ and $T \subseteq U_{j-1}$
      which is a contradiction.
   \end{proof}
   
   The posets $P_\sigma$ coincide with the posets obtained from preposets corresponding to normal cones
   of the matroid base polytope $Q(M)$ studied by Postnikov, Reiner, and Williams~\cite{PostnikovReinerWilliams}.
   
   Billera, Jia, and Reiner~\cite{BilleraJiaReiner} define posets
   related to bases of matroids:
   For each $e \in B \in \mathcal{B}(M)$ the \emph{basic bond for $e$ in $B$}
   is the set of $e' \in [n]\setminus B$ for which $(B \setminus \{ e \}) \cup \{ e' \}$ is another
   base of $M$.
   Dually, for each $e' \in [n]\setminus B$ the \emph{basic circuit for $e$ in $B$} is 
   the set of $e \in B$ for which $(B \setminus \{ e \}) \cup \{ e' \}$ is another base of $M$.
   Since $e'$ lies in the basic bond for $e$ if and only if 
   $e$ lies in the basic circuit of $e'$,
   one can define the poset $P_B$ to be the poset defined on $[n]$ 
   where $e <_{P_B} e'$ if and only if $e'$ is in the basic bond for $e$.
   The next proposition shows that our poset $P_\sigma$ is the same as
   the poset $P_B$ of Billera, Jia, and Reiner if $\sigma$ is a vertex $e_B$.
   
   \begin{proposition}
      If $B$ is a base of a matroid $M$ on $[n]$ and $\sigma$ is a vertex $e_B$ of $Q(M)$, then
      $P_\sigma = P_B$.
   \end{proposition}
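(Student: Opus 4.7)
The plan is to apply Lemma~\ref{lem-set-in-flag}, which identifies $L_\sigma$ with the collection of subsets $S \subseteq [n]$ satisfying $|B \cap S| = r(S)$, and then to show that $P_\sigma$ and $P_B$ carry exactly the same order relations. Since $\sigma = e_B$ is a vertex, Proposition~\ref{dim-of-matroid-polytope} forces $M_\sigma$ to have $n$ connected components, so each singleton $\{e\}$ with $e \in [n]$ is a component of $M_\sigma$, allowing us to view $P_\sigma$ as a poset on $[n]$. It then suffices to show that, for distinct $e, e' \in [n]$, the relation $e <_{P_\sigma} e'$ holds if and only if $e \in B$, $e' \in [n] \setminus B$, and $(B \setminus \{e\}) \cup \{e'\}$ is a base.

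First I would rule out all relations $e_1 <_{P_\sigma} e_2$ except those from $B$ to $[n] \setminus B$ by exhibiting explicit separating sets in $L_\sigma$. If $e_1, e_2 \in B$ with $e_1 \neq e_2$, then $S = B \setminus \{e_1\}$ is independent, so $r(S) = |B| - 1 = |B \cap S|$ and $S \in L_\sigma$; here $e_2 \in S$ while $e_1 \notin S$. If $e_1, e_2 \in [n] \setminus B$ with $e_1 \neq e_2$, then $S = B \cup \{e_2\}$ satisfies $|B \cap S| = |B| = r(M) \geq r(S)$, forcing equality, and $e_1 \notin S$. If $e_1 \notin B$ and $e_2 \in B$, then $S = B$ itself lies in $L_\sigma$ and separates. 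Thus, as for $P_B$, the only possible relations in $P_\sigma$ go from an element of $B$ to an element of $[n] \setminus B$.

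Finally, for $e \in B$ and $e' \in [n] \setminus B$, I would verify the equivalence of the two candidate relations. If $B' = (B \setminus \{e\}) \cup \{e'\}$ is a base and $S \in L_\sigma$ contains $e'$, then $|B' \cap S| \leq r(S) = |B \cap S|$; but if $e \notin S$ this would give $|B' \cap S| = |B \cap S| + 1$, a contradiction, so $e \in S$ and hence $e <_{P_\sigma} e'$. Conversely, if $B'$ is not a base, i.e., $e' \in \mathrm{cl}_M(B \setminus \{e\})$, I would use the flat $F = \mathrm{cl}_M(B \setminus \{e\})$: it has rank $|B| - 1$, and since $B$ is independent we have $e \notin F$, giving $B \cap F = B \setminus \{e\}$ and $|B \cap F| = r(F)$, so $F \in L_\sigma$; then $e' \in F$ while $e \notin F$ witnesses $e \not<_{P_\sigma} e'$. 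The main obstacle is this converse direction, where the crucial step is recognizing that $F = \mathrm{cl}_M(B \setminus \{e\})$ is the right flat: its complement in $[n]$ is exactly $\{e\}$ together with the basic bond for $e$ in $B$, which is precisely why failure of basis exchange produces a separating set in $L_\sigma$.
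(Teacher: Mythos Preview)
Your proof is correct and follows essentially the same approach as the paper: identify $L_\sigma$ with $\{S : |B \cap S| = r(S)\}$, exhibit explicit separating sets to rule out the impossible comparabilities, and use the same counting argument $|B' \cap S| = |B \cap S| + 1 > r(S)$ for the forward direction of the main equivalence. The only noteworthy difference is in the converse direction: where you pass to the flat $F = \mathrm{cl}_M(B \setminus \{e\})$, the paper uses $B' = (B \setminus \{e\}) \cup \{e'\}$ itself as the separating set (since $r(B') = r(M)-1 = |B \cap B'|$ when $B'$ is not a base), which is a bit more direct but equivalent in spirit.
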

   
   \begin{proof}
      Since all connected components of $M_\sigma$ are singletons, 
      $P_\sigma$ and $P_B$ have the same elements.
      If $e_1, e_2 \in B$, then they are not comparable in $P_\sigma$ since
      $e_B \subseteq H_{e_1}$ and $e_B \subseteq H_{e_2}$. 
      Also, $e_1, e_2 \notin B$ are not comparable in $P_\sigma$
      because $e_B \subseteq H_{B \cup \{ e_1 \}}$ and $e_B \subseteq H_{B \cup \{ e_2 \}}$.
      Now assume $e_1 \in B$ and $e_2 \notin B$.
      We claim that $e_1 <_{P_\sigma} e_2$ if and only if $B' := (B \setminus \{ e_1 \}) \cup \{ e_2 \} \in \mathcal{B}(M)$,
      i.e., $e_1 <_{P_B} e_2$.
      Assume $e_1 <_{P_\sigma} e_2$, i.e., $e_2 \in S$ and $\sigma \subseteq H_S$ implies $e_1 \in S$.
      Suppose $B'$ is not a base of $M$.
      Then $r(B') = r(M) -1$.
      Thus we have $e_2 \in B', \sigma \subseteq H_{B'}$, but $e_1 \notin B'$, which is a contradiction.
      Therefore $B'$ is a base of $M$.
      Conversely, assume $B' \in \mathcal{B}(M)$.
      Let $S$ be a subset of $[n]$ such that $e_2 \in S$ and $\sigma \subseteq H_S$, i.e., $|B \cap S| = r(S)$.
      If $e_1 \notin S$, then $|B' \cap S| = |B \cap S| + 1 = r(S) + 1$, which is impossible.
      Thus $e_1 <_{P_\sigma} e_2$.
   \end{proof}
   
\section{The $\C\D$-index}
\label{sec-cd-index}

   In this section, we define the $\C\D$-index for Eulerian posets and give 
   the relationship among $\C\D$-indices of polytopes when a polytope is split by a hyperplane.
   
   Let $P$ be a graded poset of rank $n+1$ with the rank function $\rho$.
   For a subset $S$ of $[n]$, define $f_P (S)$ to be the number of chains of $P$ 
   whose ranks are exactly given by the set $S$.
   The function $f_P : 2^{[n]} \to \NN$ is called the \emph{flag f-vector} of $P$.
   The \emph{flag h-vector} of $P$ is defined by the identity
   $$
   h_P (S) = \sum_{T \subseteq S} (-1)^{|S \setminus T|} \cdot f_P(T).
   $$
   Since  this identity is equivalent to the relation
   $$
   f_P (S) = \sum_{T \subseteq S} h_P (T),
   $$
   the flag $f$-vector and the flag $h$-vector contain the same information.
   
   For $S \subseteq [n]$, define the \emph{noncommutative} $\A\B$-monomial $u_S = u_1 \dots u_n$,
   where
   $$
   u_i = \left\{
   \begin{array}{ll}
      \A & \text{if } i \notin S, \\
      \B & \text{if } i \in S.
   \end{array}
   \right.
   $$
   The $\A\B$-\emph{index} of the poset $P$  is defined to be the sum
   $$
   \Psi (P) = \sum_{S \subseteq [n]} h_P (S) \cdot u_S.
   $$
   
   An alternative way of defining the $\A\B$-index is as follows.
   For a chain 
   $$
   c := \{ \hat{0} < x_1 < \cdots < x_k < \hat{1} \},
   $$
   we give a \emph{weight} $w_P (c) = w(c) = z_1 \cdots z_n$, where
   $$
   z_i = \left\{
   \begin{array}{ll}
      \B & \text{if } i \in \{ \rho(x_1), \dots, \rho(x_k) \},\\
      \A - \B & \text{otherwise.}
   \end{array}
   \right.
   $$
   Define the $\A\B$-\emph{index} of the poset $P$ to be the sum
   $$
   \Psi (P) = \sum_c w(c),
   $$
   where the sum is over all chains $c = \{ \hat{0} < x_1 < \cdots < x_k < \hat{1} \}$
   in $P$.
   Recall that a poset $P$ is \emph{Eulerian} if its M\"{o}bius function $\mu$
   is given by $\mu(x, y) = (-1)^{\rho(y) - \rho(x)}$
   (see \cite{StanleyI} for more details).
   One important class of Eulerian posets is face lattices of convex polytopes 
   (see \cite{Lindstrom,Rota}).
   Fine conjectured and Bayer and Klapper~\cite{BayerKlapper} proved that
   the $\A\B$-index of an Eulerian poset $P$ can be written uniquely
   as a polynomial of $\C = \A + \B$ and $\D = \A\B + \B\A$.
   When the $\A\B$-index can be written as a polynomial in $\C$ and $\D$, we call
   $\Psi (P)$ the $\C\D$-\emph{index} of $P$.
   We will use the notation $\Psi(Q)$ for the $\C\D$-index of the face poset of 
   a convex polytope $Q$.
   
   Let $v$ be a vertex of a polytope $Q$ and let $l(x) = c$ be a supporting hyperplane
   of $Q$ defining $v$.
   The \emph{vertex figure} $Q/v$ of $v$ is defined by
   $$
   Q/v = Q \cap \{ l(x) = c+ \delta \}
   $$
   where $\delta$ is an arbitrary small positive number.
   The polytope $Q/v$ depends on the choice of $l, c$, and $\delta$, but 
   it is well-known that its combinatorial type is independent of these variables.
   For a face $\sigma$ of $Q$, the \emph{face figure} $Q/\sigma$ of $\sigma$ is defined by
   $$
   Q/\sigma = ( \dots (( Q / \sigma_0 ) / \sigma_1 ) \dots ) / \sigma_k ,
   $$
   where $\sigma_0 \subseteq \sigma_1 \subseteq \dots \subseteq \sigma_k = \sigma$ 
   is a maximal chain with $\dim \sigma_i = i$.
   For faces $\sigma$ and $\tau$ of $Q$ with $\sigma \subseteq \tau$, 
   the face lattice of the face figure $\tau / \sigma$ is the interval $[\sigma, \tau]$.
   
   Ehrenborg and Readdy~\cite[Prop. 4.2]{EhrenborgReaddy} give formulas for the $\C\D$-index
   of a pyramid, a prism and a bipyramid of a polytope.
   
   \begin{proposition}
   \label{prop-pyramid-and-prism}
      Let $Q$ be a polytope. Then
      $$
      \begin{aligned}
         \Psi (\pyr (Q))   &= \frac 1 2 \left[ \Psi(Q) \cdot \C + \C \cdot \Psi (Q)
                              + \sum_{\sigma}\Psi (\sigma) \cdot \D \cdot \Psi (Q/\sigma)
                              \right],\\
         \Psi (\prism (Q)) &= \Psi (Q) \cdot \C + 
                              \sum_{\sigma} \Psi (\sigma) \cdot \D \cdot \Psi (Q/\sigma),\\
         \Psi (\bipyr (Q)) &= \C \cdot \Psi (Q) + 
                              \sum_{\sigma} \Psi (\sigma) \cdot \D \cdot \Psi (Q/\sigma),
      \end{aligned}
      $$
      where the sums are over all proper faces $\sigma$ of $Q$.
   \end{proposition}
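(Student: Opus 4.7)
The plan is to expand $\Psi(P) = \sum_c w(c)$ as a weighted sum over chains in the face lattice of each construction, then classify chains using the structure of the construction. In each of $\pyr(Q)$, $\prism(Q)$, and $\bipyr(Q)$ the faces split canonically into \emph{base} faces inherited from $Q$ and \emph{new} faces introduced by the construction---apex-lifts $\pyr(\sigma)$ for the pyramid and bipyramid, side faces $\sigma\times I$ for the prism---and comparability constraints force every chain to consist of base faces followed by new faces. For each proper face $\sigma$ of $Q$, the upper part of a chain is governed by the interval isomorphism with the face lattice of $Q/\sigma$.

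For the prism $\prism(Q)=Q\times I$, the faces come in three types: bottom $\sigma\times\{0\}$, top $\sigma\times\{1\}$, and side $\sigma\times I$, with bottom and top pairwise incomparable and side faces dominating the corresponding bottom and top faces. For each proper face $\sigma$ of $Q$, I would fix $\sigma\times I$ as the smallest side face in a chain and decompose the chain as a chain in the face lattice of $\sigma$, times a transition at ranks $\dim\sigma+1$ and $\dim\sigma+2$, times a chain in the face lattice of $Q/\sigma$ (using the isomorphism between $[\sigma\times I,\hat{1}]$ and that face lattice). Summing over the bottom-versus-top choice and whether $\sigma\times\{0\}$ or $\sigma\times\{1\}$ is included just below $\sigma\times I$ produces the factor $\D=\A\B+\B\A$ at the transition, giving the summand $\Psi(\sigma)\cdot\D\cdot\Psi(Q/\sigma)$. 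Chains with no side face in the interior combine bottom-only and top-only enumerations to give $\Psi(Q)\cdot\C$. The bipyramid case is symmetric, with apex-lifts from the two apices replacing the top and bottom copies; the $\C\cdot\Psi(Q)$ (rather than $\Psi(Q)\cdot\C$) reflects that the apex vertices sit at the bottom of the face lattice rather than the top.

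The pyramid case is more subtle because there is only one apex. A direct chain enumeration yields an asymmetric expression of the form $\Psi(Q)\cdot\A+\A\cdot\Psi(Q)+\sum_\sigma\Psi(\sigma)\cdot\A\B\cdot\Psi(Q/\sigma)$ (up to lower-order corrections from trivial chains), not the stated symmetric form. To reach the target formula with its factor $\frac{1}{2}$ I would exploit the fact that $\pyr(Q)$ admits two viewpoints---as a cone over the base $Q$, and as a polytope whose apex has vertex figure $Q$---and symmetrize the two decompositions using the Eulerian $\C\D$-index identity satisfied by $\Psi(Q)$. The main obstacle will be justifying this symmetrization: the $\frac{1}{2}$ does not arise from a direct combinatorial bijection but from a $\C\D$-index identity that holds only for Eulerian posets, reflecting the polytope's self-complementary rank structure rather than a straightforward chain bijection.
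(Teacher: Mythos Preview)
The paper does not prove this proposition directly; it is quoted from Ehrenborg and Readdy. What the paper does provide, in the Remark following Theorem~\ref{cd-index-hyperplane-split-case}, is a derivation of the prism and pyramid formulas as consequences of the hyperplane-split theorem: the prism formula is Theorem~\ref{cd-index-hyperplane-split-case} applied to $\prism(Q')$ cut at mid-height (so $Q^+\simeq Q^-\simeq\prism(Q')$ and $\widehat{Q}\simeq Q'$), and the pyramid formula comes from applying Theorem~\ref{cd-index-hyperplane-split-case} to $\bipyr(Q')$ cut by the hyperplane through $Q'$. In that cut $Q^+=Q^-=\pyr(Q')$ and no proper face meets both open halfspaces, so the theorem reads $\Psi(\bipyr(Q'))=2\Psi(\pyr(Q'))-\Psi(Q')\cdot\C$; combined with the bipyramid formula this yields the pyramid formula with its factor $\tfrac12$. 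The bipyramid formula itself is obtained in the paper by noting that $\bipyr(Q)$ is the dual of the prism of the dual of $Q$ and that dualizing reverses $\C\D$-words.

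Your direct chain-enumeration argument for the prism and bipyramid is essentially the original Ehrenborg--Readdy route and is fine. For the pyramid, however, your proposal has a real gap. You correctly see that raw chain enumeration gives an asymmetric expression, but ``symmetrize the two decompositions using an Eulerian $\C\D$-identity'' is not a proof---you have not named the identity or shown why it holds. The identity you need is exactly $2\Psi(\pyr(Q))=\Psi(\bipyr(Q))+\Psi(Q)\cdot\C$, and the paper's mechanism for producing it (splitting the bipyramid) is both cleaner and logically prior to your proposed symmetrization. Alternatively, Ehrenborg and Readdy obtain the $\tfrac12$ via their coproduct/derivation formalism, where $\pyr$ acts as a derivation and the Leibniz rule forces the symmetric form; if you want a self-contained argument you would need to reproduce that machinery, not just gesture at Eulerianness.
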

   
   Note that the $\C\D$-index of $\bipyr (Q)$ is obtained from the $\C\D$-index of $\prism (Q)$ 
   because $\bipyr (Q)$ is the dual of the prism of the dual of $Q$
   and the $\C\D$-index of the dual polytope is obtained by writing every $\A\B$-monomial in reverse order
   (see~\cite{EhrenborgReaddy} for details).
   
   Let $Q$ be a polytope in $\reals^n$.
   Let $H$ be a hyperplane in $\reals^n$ defined by $l(x) = c$ and 
   $H^+$ (resp. $H^-$) be the closed halfspace $l(x) \ge c$ (resp. $l(x) \le c$).
   For simplicity, let $Q^+ := Q \cap H^+$, $Q^- = Q \cap H^-$, and $\widehat{Q} := Q \cap H$.
   Also we use the notations $\sigma^+ := \sigma \cap H^+$, $\sigma^- := \sigma \cap H^-$, and  
   $\hat{\sigma} := \sigma \cap H$ for a face $\sigma$ of $Q$.
   The following theorem provides the relationship among $\C\D$-indices of polytopes 
   $Q$, $Q^+$, $Q^-$ and faces of $\widehat{Q}$ when a polytope $Q$ is split by a hyperplane.
   
   \begin{theorem}
   \label{cd-index-hyperplane-split-case}
      Let $Q$ be a polytope in $\reals^n$ and $H$ be a hyperplane in $\reals^n$ intersecting 
      the interior of $Q$.
      Then the following identity holds:
      $$
      \Psi (Q) = \Psi (Q^+) + \Psi (Q^-) - \Psi (\widehat{Q}) \cdot \C - 
      \sum_\sigma \Psi (\hat{\sigma}) \cdot \D \cdot \Psi ( \widehat{Q} / \hat{\sigma} ),
      $$
      where the sum is over all proper faces $\sigma$ of $Q$ intersecting both
      open halfspaces $H^+ \setminus H$ and $H^- \setminus H$ nontrivially.
   \end{theorem}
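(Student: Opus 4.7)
The plan is to prove the identity directly from the weight formulation $\Psi(P) = \sum_c w(c)$ of the $\A\B$-index. Since the $\C\D$-expansion of an Eulerian poset is unique, it suffices to establish the identity as noncommutative polynomials in $\A, \B$ (with $\C = \A + \B$ and $\D = \A\B + \B\A$). First, I classify each face $\sigma$ of $Q$ by its position relative to $H$ into one of four types: \emph{(+)} if $\sigma \subseteq H^+\setminus H$, \emph{(-)} if $\sigma \subseteq H^-\setminus H$, \emph{(0)} if $\sigma \subseteq H$, and \emph{(B)} if $\sigma$ meets both open halfspaces. By convexity, no chain in the face lattice of $Q$ contains both a type-(+) and a type-(-) face, and type-(B) faces in any chain form an upward-closed subsequence. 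The faces of $Q^+$ then decompose as \emph{old} faces (faces of $Q$ of types (+) or (0)) and \emph{new} faces, where the new faces come in two kinds for each proper type-(B) face $\sigma$ of $Q$: $\sigma^+:=\sigma\cap H^+$ at the same rank as $\sigma$ in $Q$, and $\hat\sigma:=\sigma\cap H$ at rank one less (with $\hat\sigma$ a facet of $\sigma^+$); additionally $\widehat Q = Q \cap H$ and $Q^+$ itself are the ``top-level'' instances. The analogous picture holds for $Q^-$.

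Using inclusion-exclusion on this type classification of chains of $Q$, one can write
\[
\Psi(Q) = \Psi_{+0}(Q) + \Psi_{-0}(Q) - \Psi_0(Q) + \Psi_B(Q),
\]
where $\Psi_{*}$ denotes the sum of chain weights restricted to the indicated types. Since chains of $Q^\pm$ with all proper faces old correspond weight-for-weight to chains of $Q$ with all proper faces in $H^\pm$, one also has $\Psi(Q^\pm) = \Psi_{\pm 0} + \Psi^\pm_{\mathrm{new}}$, where $\Psi^\pm_{\mathrm{new}}$ sums the weights of chains of $Q^\pm$ using at least one new face. Subtracting reduces the theorem to the $\A\B$-identity
\[
\Psi^+_{\mathrm{new}} + \Psi^-_{\mathrm{new}} + \Psi_0 - \Psi_B = \Psi(\widehat Q)\cdot\C + \sum_\sigma \Psi(\hat\sigma)\cdot\D\cdot\Psi(\widehat Q/\hat\sigma).
\]

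I then establish this identity by classifying each chain of $Q^\pm$ containing a new face by its \emph{smallest} new face $F$, which must be $\widehat Q$, $\hat\sigma$, or $\sigma^\pm$ for some proper type-(B) face $\sigma$. Chains with $F = \sigma^\pm$ pair, via $\sigma^\pm \leftrightarrow \sigma$, with chains in $\Psi_B$ whose non-(B) faces lie in $H^\pm$; collecting over $Q^+$ and $Q^-$ and subtracting $\Psi_B$ leaves only chains in $\Psi_B$ whose non-(B) faces are of type (0), which combine with $F = \hat\sigma$ contributions below. Chains with $F = \widehat Q$ consist of a chain in the subposet of type-(0) faces of $Q$ followed by $\widehat Q < Q^\pm$; summed across $\pm$ and combined with $\Psi_0$ they yield a factor $\A + \B = \C$ at rank $d$, giving a ``partial'' $\widehat Q$-contribution. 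Chains with $F = \hat\sigma$ factor as (chain in $\hat\sigma$) below, a transition at the two ranks $\dim\sigma$ and $\dim\sigma+1$ where the chain may or may not include $\hat\sigma$ and $\sigma^\pm$, and (chain in the face figure $\widehat Q/\hat\sigma$) above; summing the four transition combinations across $Q^+$ and $Q^-$, combined with the residual $\Psi_B$ chains that pass through $\sigma$, collapses via $\A\B + \B\A = \D$ to a factor $\D$ at the two transition ranks, giving $\Psi(\hat\sigma)\cdot\D\cdot\Psi(\widehat Q/\hat\sigma)$. Simultaneously, the ``new'' parts of the $F = \hat\sigma$ chains below $\hat\sigma$ promote the partial $\widehat Q$-contribution to the full $\Psi(\widehat Q)\cdot\C$ by accounting for chains in $\widehat Q$'s face lattice that use $\hat\sigma$. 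The main obstacle is this final weight-tracking: the several chain contributions from $Q, Q^+, Q^-, \widehat Q$ must combine precisely, with the rank shift between a type-(B) face $\sigma$ of $Q$ and its hyperplane section $\hat\sigma$ reconciled to produce the exact $\C$ and $\D$ factors.
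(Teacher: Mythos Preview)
Your reduction via inclusion--exclusion to the identity
\[
\Psi^+_{\mathrm{new}} + \Psi^-_{\mathrm{new}} + \Psi_0 - \Psi_B \;=\; \Psi(\widehat Q)\cdot\C + \sum_\sigma \Psi(\hat\sigma)\cdot\D\cdot\Psi(\widehat Q/\hat\sigma)
\]
is correct, and the overall chain--weight strategy matches the paper's. The gap is in the final accounting via the ``smallest new face $F$'' classification. When $F=\hat\sigma$, the chain below $\hat\sigma$ consists only of \emph{old} type-(0) faces by definition of $F$, so summing those yields $\Psi_{\mathrm{old}}(\hat\sigma)$, not the full $\Psi(\hat\sigma)$. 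Worse, the part of such a chain above $\hat\sigma$ is not a chain in $\widehat Q/\hat\sigma$: the interval $[\hat\sigma,Q^+]$ contains both $\hat\tau$ and $\tau^+$ for every type-(B) face $\tau\supseteq\sigma$, so it is combinatorially the pyramid over $\widehat Q/\hat\sigma$, not $\widehat Q/\hat\sigma$ itself. Your claimed factorization ``(chain in $\hat\sigma$) $\cdot$ transition $\cdot$ (chain in $\widehat Q/\hat\sigma$)'' therefore fails on both sides, and the patch (``new parts of the $F=\hat\sigma$ chains below $\hat\sigma$\ldots'') contradicts $F$ being the \emph{smallest} new face.

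The paper's proof fixes this by cutting at a different place. Rather than the smallest new face, it associates to each chain $c^+$ in $Q^+$ (with no type-(+) face) the chain $\hat c$ in $\widehat Q$ obtained by intersecting every face with $H$; this sets up a bijection between the case-2 chains $c$ in $Q$ and all chains $\hat c$ in $\widehat Q$. For fixed $\hat c$ the preimages in $Q^+$ are parametrized by a transition index $j$ (where the chain passes from $H$-faces $\hat\sigma_i$ to halfspace-faces $\sigma_i^+$), with two preimages per $j$. At each $j$ the ``below'' part is $\hat c_1=\{\emptyset<\hat\sigma_1<\cdots<\hat\sigma_j\}$, a chain in $[\emptyset,\hat\sigma_j]$ that may already contain new faces $\hat\sigma_i$ for $s\le i<j$; the ``above'' part is $\hat c_2$, a chain in $[\hat\sigma_j,\widehat Q]$. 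Summing over all $(\hat c_1,\hat c_2)$ thus gives the full $\Psi(\hat\sigma_j)\cdot\D\cdot\Psi(\widehat Q/\hat\sigma_j)$, and the $j=k+1$ case together with the analogous $Q^-$ chains produces $w(\hat c)\cdot\C+w(c)$. Your smallest-new-face cut lands one step too early to see this factorization; if you instead index by the transition point $j$ (equivalently, by the first non-$H$ face of $c^+$) the argument goes through and coincides with the paper's.
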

   
   \begin{proof}
      Let
      $$
      c = \{ \emptyset = \sigma_0 < \sigma_1 < \cdots < \sigma_k < \sigma_{k+1} = Q \}
      $$
      be a chain of faces of $Q$.
      If there is a face $\sigma_i$ which meets $H^+ \setminus H$
      (resp. $H^- \setminus H$) but does not meet $H^- \setminus H$
      (resp. $H^+ \setminus H$),
      then
      $$
      c^+ = \{ \emptyset = \sigma_0^+ < \sigma_1^+ < \cdots < \sigma_k^+ < \sigma_{k+1}^+ = Q^+ \}
      $$
      $$
      (\text{resp. }c^- = \{ \emptyset = \sigma_0^- < \sigma_1^- < \cdots < \sigma_k^- < \sigma_{k+1}^- = Q^- \})
      $$
      is a corresponding chain in the face poset of $Q^+$ (resp. $Q^-$) and
      $w(c) = w(c^+)$ (resp. $w(c) = w(c^-)$).
      
      Now suppose that either $\sigma_i$ is contained in $H$ or intersects both 
      $H^+ \setminus H$ and $H^- \setminus H$ for all $i = 0, \dots , k+1$.
      Let $s$ be the smallest index such that $\sigma_i \nsubseteq H$ and
      $\hat{c}$ be the chain in the face poset of $\widehat{Q}$ defined by
      $$
      \hat{c} := c \cap H = \{ \emptyset = \sigma_0 < \hat{\sigma}_1 < \cdots
      < \hat{\sigma}_{k+1} = \widehat{Q} \}.
      $$
      
      \begin{enumerate}
         \item
         Consider a chain $c^+$ in the face poset of $Q^+$ such that
         $c^+ \cap H$ and $\hat{c}$ are the same.
         Then $c^+$ is one of the following chains for some $j$ with $s \le j \le k+1$:
         $$
         \begin{aligned}
            c^+_1 &= \{ \emptyset < \hat{\sigma}_1 < \cdots < \hat{\sigma}_{j-1}
                     < \sigma_j^+ < \cdots < \sigma_k^+ < \sigma_{k+1}^+ = Q^+ \},\\
            c^+_2 &= \{ \emptyset < \hat{\sigma}_1 < \cdots < \hat{\sigma}_{j-1}
                     < \hat{\sigma}_j < \sigma_j^+ < \cdots < \sigma_k^+ < \sigma_{k+1}^+ = Q^+ \}.
         \end{aligned}
         $$
         Let $\sigma := \sigma_j$. 
         Then $\sigma$ meets both open halfspaces $H^+ \setminus H$ and $H^- \setminus H$ nontrivially.
         Let 
         $\hat{c}_1 := \{ \emptyset < \hat{\sigma}_1 < \cdots < \hat{\sigma}_{j-1} < \hat{\sigma} \}$ and
         $\hat{c}_2 := \{ \hat{\sigma} < \hat{\sigma}_{j+1} < \cdots < \hat{\sigma}_k < \widehat{Q} \}$.
         There are two cases:
         \begin{enumerate}
            \item
            The first case is when $s \le j \le k$.
            Then the sum of the weights of the chains $c^+_1$ and $c^+_2$ is given by
            $$
            w(c^+_1) + w(c^+_2) = w_{[\emptyset, \hat{\sigma} ]}(\hat{c}_1)\cdot \A \cdot \B \cdot
            w_{[\hat{\sigma}, \widehat{Q}]}(\hat{c}_2).
            $$
            Note that this case does not occur when $s = k+1$.
            \item
            The second case is when $j = k+1$.
            Then the sum of the weights of the chains is
            $$
            w(c^+_1) + w(c^+_2) = w(\hat{c}) \cdot \A.
            $$
         \end{enumerate}
         \item
         Consider a chain $c^-$ in the face poset of $Q^-$ such that
         $c^- \cap H$ and $\hat{c}$ are the same.
         Then $c^-$ is one of the following chains for some $j$ with $s - 1 \le j \le k+1$:
         $$
         \begin{aligned}
            c^-_1 &= \{ \emptyset < \hat{\sigma}_1 < \cdots < \hat{\sigma}_j
                     < \sigma_{j+1}^- < \cdots < \sigma_k^- < \sigma_{k+1}^- = Q^- \},\\
            c^-_2 &= \{ \emptyset < \hat{\sigma}_1 < \cdots < \hat{\sigma}_j
                     < \sigma_j^- < \sigma_{j+1}^- < \cdots < \sigma_k^- < \sigma_{k+1}^- = Q^- \}.
         \end{aligned}
         $$
         Let $\sigma := \sigma_j$ and define chains $\hat{c}_1$ and $\hat{c}_2$ as in Case (1). 
         If $j \ge s$, then $\sigma$ meets both $H^+ \setminus H$ and $H^- \setminus H$.
         There are three cases:
         \begin{enumerate}
            \item
            The first case is when $s \le j \le k$.
            Then the sum of the weights of the chains $c^-_1$ and $c^-_2$ is given by
            $$
            w(c^-_1) + w(c^-_2) = w_{[\emptyset, \hat{\sigma} ]}(\hat{c}_1)\cdot \B \cdot \A \cdot
            w_{[\hat{\sigma}, \widehat{Q}]}(\hat{c}_2).
            $$
            Note that this case does not occur when $s = k+1$.
            \item
            The second case is when $j = k+1$.
            Then the only possible chain is $c^-_2$ and its weight is
            $$
            w(c^-_2) = w(\hat{c}) \cdot \B.
            $$
            \item
            The third case is when $j = s - 1$.
            Since $\sigma_{s-1} \subseteq H$, we have $\hat{\sigma}_{s-1} = \hat{\sigma}^-_{s-1}$.
            Therefore the only possible chain is $c^-_1$ and its weight is
            $$
            w(c^-_1) = w_{[\emptyset, \hat{\sigma} ]}(\hat{c}_1) \cdot \B \cdot (\A - \B) \cdot 
            w_{[\hat{\sigma}, \widehat{Q}]}(\hat{c}_2) = w(c).
            $$
         \end{enumerate}
      \end{enumerate}
      Now summing over all such chains in the face posets of $Q^+$ and $Q^-$, we obtain
      $$
      \sum_{c^+} w(c^+) + \sum_{c^-} w(c^-) =
         \sum_{\hat{c}_1, \hat{c}_2} w_{[\emptyset, \hat{\sigma} ]}(\hat{c}_1)\cdot \D \cdot 
         w_{[\hat{\sigma}, \widehat{Q}]}(\hat{c}_2) 
         + w(\hat{c}) \cdot \C + w(c),
      $$
      where the first (resp. second) sum is over all chains $c^+$ (resp. $c^-$) in the face poset of
      $Q^+$ (resp. $Q^-$) such that $c^+ \cap H = \hat{c}$ (resp. $c^- \cap H = \hat{c}$),
      and the third sum is over all pairs of chains $\hat{c}_1, \hat{c}_2$ obtained above
      such that $\hat{c}_2$ is nontrivial.
      Finally, summing over all chains in the face poset of $Q$, 
      we finish the proof.
   \end{proof}

   \begin{remark}
      The formula for the prism of a polytope in 
      Proposition~\ref{prop-pyramid-and-prism} is a special case of
      Theorem~\ref{cd-index-hyperplane-split-case},
      since in this case 
      $$
      \begin{aligned}
         Q = \prism(Q') &\simeq Q' \times \left[0, 0.5 \right] = Q^- \\
                        &\simeq Q' \times \left[0.5, 1 \right] = Q^+
      \end{aligned}
      $$
      and $Q' \simeq Q' \times \{ 0.5 \} = \widehat{Q}$.
      Also, the formula for the pyramid of a polytope is obtained from 
      Theorem~\ref{cd-index-hyperplane-split-case}
      by considering $Q = \bipyr (Q')$ split by the hyperplane containing $Q'$:
      in this case, $Q^+ = Q^- = \pyr (Q')$ and there are no proper faces of $\bipyr (Q')$
      intersecting both open halfspaces nontrivially.
   \end{remark}
   
\section{Hyperplane splits of a matroid base polytope}
\label{sec-hyperplane-splits}

   In this section, we define hyperplane splits of a matroid base polytope
   and give conditions when they occur.
   
   For a matroid $M$ on $[n]$, a \emph{hyperplane split} of the matroid base polytope $Q(M)$ is a decomposition
   $Q(M) = Q(M_1) \cup Q(M_2)$ where
   \begin{enumerate}[(i)]
      \item
      $M_1$ and $M_2$ are matroids on $[n]$, and
      \item
      the intersection $Q(M_1) \cap Q(M_2)$ is a proper face of 
      both $Q(M_1)$ and $Q(M_2)$.
   \end{enumerate}
   
   Let $\sum_{i=1}^n a_i x_i = b$ be an equation defining corresponding hyperplane $H$.
   Since the intersection $Q(M_1) \cap Q(M_2)$ is a matroid base polytope contained in $H$ and 
   its edges are parallel to $e_i - e_j$ for some $i \ne j$,
   the only constraints on the normal vector $(a_1, a_2, \dots, a_n)$ of $H$ 
   are of the form $a_i = a_j$.
   Using the fact that $Q(M)$ is a subset of a simplex defined by $\sum_{i=1}^n x_i = r(M)$
   and scaling the right hand side $b$, one can assume that $H$ is defined by
   $\sum_{e \in S} x_e = k$ for some subset $S$ of $[n]$ and some positive integer $k < r(M)$.
   
   The following result characterizes when hyperplane splits occur.
   
   \begin{theorem}
   \label{matroid-base-polytope-split}
      Let $M$ be a rank $r$ matroid on $[n]$ and $H$ a hyperplane defined by
      $\sum_{e \in S} x_e = k$.
      Then $H$ gives a hyperplane split of $Q(M)$
      if and only if the following are satisfied:
      \begin{enumerate}[(i)]
         \item
         $r(S) > k$ and $r(S^c) > r-k$,
         \item
         if $I_1$ and $I_2$ are $k$-element independent subsets of $S$ such that
         $(M/I_1)|_{S^c}$ and $(M/I_2)|_{S^c}$ have rank $r-k$, then 
         $(M/I_1)|_{S^c} = (M/I_2)|_{S^c}.$
      \end{enumerate}
   \end{theorem}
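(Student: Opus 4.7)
The plan is to prove each direction by analyzing how the hyperplane $H$ meets the vertex-edge structure of $Q(M)$.

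For necessity, suppose $H$ gives a hyperplane split $Q(M) = Q(M^+) \cup Q(M^-)$. Since $H$ separates $Q(M)$ into two pieces with interior on each side, there must exist bases $B_1, B_2$ of $M$ with $|B_1 \cap S| > k$ and $|B_2 \cap S| < k$, which yield $r(S) > k$ and $r(S^c) > r-k$, establishing (i). The intersection $\widehat{Q} := Q(M) \cap H$ is a proper face of $Q(M^+)$, so it is a matroid base polytope $Q(N)$, and every basis $B$ of $N$ has $|B \cap S| = k$. Therefore $r_N(S) = k$ and $r_N(S^c) = r-k$, so $r_N(S) + r_N(S^c) = r(N)$, forcing $S$ to be a separator and $N = N|_S \oplus N|_{S^c}$. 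To derive (ii), take any $k$-element independent subset $I \subseteq S$ of $M$ with $r((M/I)|_{S^c}) = r-k$ and any basis $J$ of $(M/I)|_{S^c}$; then $I \cup J \in \mathcal{B}(M)$ with $|(I \cup J) \cap S| = k$, so $I \cup J \in \mathcal{B}(N)$ and hence $I \in \mathcal{B}(N|_S)$. For any $J' \in \mathcal{B}(N|_{S^c})$, the set $I \cup J'$ lies in $\mathcal{B}(N) \subseteq \mathcal{B}(M)$, which shows $J' \in \mathcal{B}((M/I)|_{S^c})$; hence $(M/I)|_{S^c} = N|_{S^c}$, independent of the choice of $I$.

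For sufficiency, assume (i) and (ii). The crucial observation is that on any edge $e_B e_{B'}$ of $Q(M)$, corresponding to $B' = (B \setminus \{\alpha\}) \cup \{\beta\}$, the linear form $\sum_{e \in S} x_e$ takes integer values at the endpoints differing by at most one. Hence $H$ does not meet the relative interior of any edge of $Q(M)$, and the vertex sets of $Q^\pm := Q(M) \cap H^\pm$ are exactly the $e_B$ with $\pm(|B \cap S| - k) \geq 0$, while (i) ensures both halves are nondegenerate. I would then define $L' := (M/I)|_{S^c}$ for any $I$ satisfying the hypothesis of (ii) (well-defined precisely by (ii)), and let $L$ be the set system on $S$ whose bases are the $k$-element independent subsets $I$ of $M|_S$ with $r((M/I)|_{S^c}) = r-k$. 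To check $L$ satisfies the basis-exchange axiom, I would take bases $I_1, I_2$ of $L$ and $a \in I_1 \setminus I_2$, fix a common basis $J$ of $L' = (M/I_1)|_{S^c} = (M/I_2)|_{S^c}$, and apply basis exchange in $M$ to $I_1 \cup J$ and $I_2 \cup J$ to produce $b \in I_2 \setminus I_1$ with $((I_1 \setminus \{a\}) \cup \{b\}) \cup J \in \mathcal{B}(M)$, forcing $(I_1 \setminus \{a\}) \cup \{b\} \in \mathcal{B}(L)$.

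A direct check on vertex sets identifies $\widehat{Q}$ with $Q(L \oplus L')$, so by Theorem~\ref{thm-matroid-polytopes} every edge of $\widehat{Q}$ is parallel to a difference $e_\alpha - e_\beta$. Every edge of $Q^+$ is then either an edge of $Q(M)$ lying entirely in $Q^+$ (automatically of that form) or an edge of the facet $\widehat{Q}$ of $Q^+$ (also of that form); a second application of Theorem~\ref{thm-matroid-polytopes} identifies $Q^+$ as a matroid base polytope, and the same argument applies to $Q^-$, with $\widehat{Q}$ a proper face of both by construction. The main technical obstacle I anticipate is twofold: verifying the basis-exchange axiom for $L$, where condition (ii) is essential because it provides a common matroid $L'$ on $S^c$ that allows the exchange to be transported back from $M$; and justifying the edge dichotomy for $Q^+$, which rests on the observation that $H$ does not cut any edge of $Q(M)$ in its relative interior so that no spurious edges appear in $Q^+$ beyond the two enumerated types.
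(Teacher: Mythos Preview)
Your proposal is correct and follows essentially the same approach as the paper. Both arguments hinge on the equivalence between condition~(ii) and the collection $\mathcal{B}_k = \{B \in \mathcal{B}(M): |B\cap S|=k\}$ being the set of bases of a matroid, and both invoke Theorem~\ref{thm-matroid-polytopes} to pass between that statement and the geometric split. The paper packages the latter step tersely as ``the assertion follows from Theorem~\ref{thm-matroid-polytopes},'' whereas you spell out the edge dichotomy for $Q^\pm$ explicitly; and for the necessity of~(ii) the paper runs an induction on $|I_1\setminus I_2|$ inside $\mathcal{B}_k$, while you instead read off $(M/I)|_{S^c}=N|_{S^c}$ from the separator decomposition of the matroid $N$ underlying $\widehat{Q}$---a slightly cleaner route to the same conclusion. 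Your organization via $L\oplus L'$ is equivalent to the paper's direct verification of basis exchange for $\mathcal{B}_k$.
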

   
   \begin{remark}
      Note that if $I$ is a $k$-element independent subset of $S$ and 
      $J$ is an $(r-k)$-element independent subset of $S^c$, then
      $I$ is a base for $(M/J)|_S$ if and only if $J$ is a base for $(M/I)|_{S^c}$.
      Therefore the condition (ii) can be replaced with the following condition for $S^c$:
      \begin{enumerate} \it
         \item[(ii$\,'$)]
         if $J_1$ and $J_2$ are $(r-k)$-element independent subsets of $S^c$ such that
         $(M/J_1)|_S$ and $(M/J_2)|_S$ have rank $k$, then 
         $(M/J_1)|_S = (M/J_2)|_S$.
      \end{enumerate}
   \end{remark}
   
   \begin{remark}
      Herrmann and Joswig \cite{HerrmannJoswig} study the splits of general polytopes.
      For the entire hypersimplex, Theorem~\ref{matroid-base-polytope-split} reduces to \cite[Lemma 38]{HerrmannJoswig}.
   \end{remark}
   
   \begin{proof}[Proof of Theorem~\ref{matroid-base-polytope-split}]
      The condition (i) is equivalent to the condition that $H$ intersects the interior of $Q(M)$ nontrivially.
      
      Define $\mathcal{B}_k = \{ B \in \mathcal{B}(M) : | B \cap S | = k \}$.
      We will show that the condition (ii) holds if and only if
      $\mathcal{B}_k$ is a collection of bases of some matroid.
      Then the assertion follows from Theorem~\ref{thm-matroid-polytopes}.
      
      Suppose that the condition (ii) is true.
      Choose any bases $B_1$ and $B_2$ in $\mathcal{B}_k$ and $x \in B_1 \setminus B_2$ 
      (without loss of generality, we may assume $x \in B_1 \cap S$).
      Let $I_i = B_i \cap S$ and $J_i = B_i \setminus S$ for $i = 1,2$.
      Then the condition (ii) implies that there is a base $B = I_2 \cup J_1$ in $\mathcal{B}_k$.
      Since $B_1, B \in \mathcal{B}$, there is $y \in B \setminus B_1 \subseteq I_2 \subseteq B_2$ 
      such that $B_3 = (B \setminus \{ x \}) \cup \{ y \} \in \mathcal{B}$.
      Since $y \in I_2 \subseteq S$, $B_3 \in \mathcal{B}_k$.
      Thus $\mathcal{B}_k$ forms a collection of bases of a matroid.
      
      Conversely suppose that $\mathcal{B}_k$ is a collection of bases of some matroid.
      Let $I_1$ and $I_2$ be $k$-element independent subsets of $S$ such that
      $(M/I_1)|_{S^c}$ and $(M/I_2)|_{S^c}$ have rank $r-k$.
      Choose $J_1 \in \mathcal{B}((M/I_1)|_{S^c})$ and $J_2 \in \mathcal{B}((M/I_2)|_{S^c})$.
      Then $B_1 = I_1 \cup J_1$ and $B_2 = I_2 \cup J_2$ are bases for $B$.
      We claim that $I_2 \cup J_1$ is also a base of $M$: this implies
      $\mathcal{B}((M/I_1)|_{S^c}) \subseteq \mathcal{B}((M/I_2)|_{S^c})$ and 
      (ii) follows by symmetry.
      We use induction on the size of $I_1 - I_2$.
      
      \emph{Base Case}:
         If $|I_1 \setminus I_2| = 0$, we have $I_2 \cup J_1 = B_1 \in \mathcal{B}$.
      
      \emph{Inductive Step}:
         Suppose $|I_1 \setminus I_2| = l$ for some $l \le k$.
         Choose an element $x \in I_1 \setminus I_2 \subseteq B_1 \setminus B_2$.
         Since $\mathcal{B}_k$ forms a matroid, there exist $y \in I_2 \setminus I_1$ such that
         $B_3 = (B_1 \setminus \{ x \}) \cup \{ y \} \in \mathcal{B}_k \subseteq \mathcal{B}$.
         Since $B_3 = [(I_1 \setminus \{ x \}) \cup \{ y \}] \cup J_1$, we have $|(B_3 \cap S) \setminus I_2| = l-1$
         and the induction hypothesis implies $I_2 \cup J_1 \in \mathcal{B}$.
   \end{proof}
   
\section{Rank $2$ matroids}
\label{sec-rank-2-matroids}

   In this section we study the $\C\D$-index of a matroid base polytope 
   when a matroid has rank $2$.
   
   A (loopless) rank $2$ matroid $M$ on $[n]$ is determined up to isomorphism by the composition
   $\alpha(M)$ of $n$ that gives the sizes $\alpha_i$ of its parallelism classes.
   Let $\alpha := (\alpha_1, \alpha_2, \dots ,\alpha_k)$ be a composition of $n$ with the length $\ell(\alpha) = k$ 
   and let $M_\alpha$ be the corresponding rank $2$ matroid on $[n]$.
   When $k = 2$, we have
   $$
   Q(M_\alpha) \cong Q(U_{1,\alpha_1}) \times Q(U_{1, \alpha_2})
   \cong \DD_{\alpha_1} \times \DD_{\alpha_2},
   $$
   where $U_{r,n}$ is the uniform matroid of rank $r$ on $n$ elements
   and $\DD_n$ is an $(n-1)$-dimensional simplex.
   
   For two weak compositions (i.e., compositions allowing $0$ as parts)
   $\alpha$ and $\beta$ of the same length, we define
   $\beta \le \alpha$ if $\beta_i \le \alpha_i$ for all $i = 1, 2, \dots, \ell(\alpha)$.
   Let $\bar{\beta}$ be the composition obtained from $\beta$ by deleting $0$ parts.
   If $\alpha = (2,4,0,6,7)$ and $\beta = (1,3,0,6,3)$, then $\beta < \alpha$ and
   $\bar{\beta} = (1,3,6,3)$.
   
   When $M$ has rank $2$, Theorem~\ref{matroid-base-polytope-split}
   can be rephrased in the following way. 
   
   \begin{corollary}
   \label{matroid-base-polytope-split-rank-two}
      Let $M$ be a rank $2$ matroid on $[n]$ and $H$ be a hyperplane defined by 
      $\sum_{e \in S} x_e = 1$.
      Then $H$ gives a hyperplane split of $Q(M)$ if and only if
      $S$ and $S^c$ are both unions of at least two parallelism classes of $M$.
   \end{corollary}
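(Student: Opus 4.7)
The plan is to specialize Theorem~\ref{matroid-base-polytope-split} to $r=2$, $k=1$, and translate each of its two hypotheses into the language of parallelism classes. Throughout, since the section restricts to loopless rank-$2$ matroids, every element $a \in [n]$ lies in a well-defined parallelism class $[a]$, and a subset $T \subseteq [n]$ has $r(T)=2$ if and only if $T$ meets at least two distinct parallelism classes.

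First I would handle condition (i). With $r=2$ and $k=1$, it reads $r(S)>1$ and $r(S^c)>1$, i.e. $r(S)=r(S^c)=2$. By the observation above, this is exactly the statement that each of $S$ and $S^c$ meets at least two parallelism classes of $M$.

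Next I would reformulate condition (ii). A $1$-element independent subset of $S$ is just a singleton $\{a\}$ with $a\in S$. The quotient $M/\{a\}$ has rank $1$ and its loops are precisely the elements parallel to $a$, so $(M/\{a\})|_{S^c}$ has rank $1$ iff $S^c\not\subseteq [a]$, in which case it is the rank-$1$ matroid on $S^c$ with loop set $[a]\cap S^c$. Two such rank-$1$ matroids on the same ground set agree iff their loop sets coincide. Hence, once (i) holds, (ii) becomes:
\[
\text{for all } a,b\in S \text{ with } [a]\cap S^c\ne S^c \text{ and } [b]\cap S^c\ne S^c, \ [a]\cap S^c = [b]\cap S^c.
\]

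Now I would prove this last condition is equivalent to $S$ being a union of parallelism classes (equivalently, $S^c$ being such a union). If $S$ is a union of parallelism classes, then $[a]\subseteq S$ for every $a\in S$, so $[a]\cap S^c=\emptyset$ for all $a\in S$; the rank-$1$ matroids are all loopless on $S^c$ and trivially agree. Conversely, if $S$ is not a union of parallelism classes, some class $P$ meets both $S$ and $S^c$. Pick $a\in P\cap S$; using $r(S)=2$ from (i), pick $b\in S$ with $[b]\ne P$. Then $P=[a]\ne[b]$ are disjoint parallelism classes, so $[a]\cap S^c=P\cap S^c\ne\emptyset$ while $[b]\cap S^c$ is disjoint from $P$, and the two loop sets differ; the rank hypotheses on $(M/\{a\})|_{S^c}$ and $(M/\{b\})|_{S^c}$ are guaranteed by $r(S^c)=2$, which forces $S^c$ to meet at least two classes and hence to contain elements outside $[a]$ and outside $[b]$. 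This violates (ii).

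Combining the two translations, Theorem~\ref{matroid-base-polytope-split} yields the hyperplane split exactly when $S$ is a union of parallelism classes and both $S$ and $S^c$ contain at least two such classes, which is the statement of the corollary. The only mildly delicate step is the failure direction just above, where one must verify that condition (i) supplies both the witness $b\in S$ in a class different from $P$ and the rank-$1$ hypotheses needed to invoke (ii); everything else is a direct unwinding of definitions from the rank-$2$ structure.
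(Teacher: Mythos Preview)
Your proof is correct and follows exactly the approach the paper intends: the paper states this corollary as an immediate specialization of Theorem~\ref{matroid-base-polytope-split} without writing out a proof, and you have supplied precisely that specialization, correctly translating conditions (i) and (ii) at $r=2$, $k=1$ into the language of parallelism classes. The only thing to add is that the paper leaves these details to the reader, so your write-up is more explicit than the paper itself.
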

   %
      %
   %
   %
   %
   
   \begin{question}
   \label{matroidal-version-of-cd-index-hyperplane-split-case}
      When $Q(M_1) \cup Q(M_2)$ is a hyperplane split of $Q(M)$ with a corresponding
      hyperplane $H$, can Theorem~\ref{cd-index-hyperplane-split-case} be restated 
      in terms of matroids?
   \end{question}
   
   %
   %
   %
   %
   
   If $M$ has rank $2$, then one can rephrase Theorem~\ref{cd-index-hyperplane-split-case} 
   in terms of matroids (see Proposition~\ref{cd-index-hyperplane-split-case-rank-two} below), but 
   Question~\ref{matroidal-version-of-cd-index-hyperplane-split-case} is open for higher ranks.
   
   After the relabeling, one may assume that a rank $2$ matroid $M$ has parallelism classes $P_1, P_2, \dots, P_k$
   and $S = \cup_{i = 1}^m P_i$ for some $m$.
   In this case, one can restate Theorem~\ref{cd-index-hyperplane-split-case} in terms of matroids as follows.
   
   \begin{proposition}
   \label{cd-index-hyperplane-split-case-rank-two}
      Let $M$ be a rank $2$ matroid on $[n]$ with at least four parallelism classes $P_1, P_2, \dots, P_k$
      and $S = \cup_{i = 1}^m P_i$ for some $m$ satisfying $2 \le m \le k-2$.
      Then the hyperplane $H$ defined by $\sum_{e \in S} x_e = 1$ gives a hyperplane split 
      $Q(M_1) \cup Q(M_2)$ of $Q(M)$ where $M_1$ is a matroid with parallelism classes $S, P_{m+1}, \dots, P_k$
      and $M_2$ is a matroid whose parallelism classes are $P_1, \dots, P_m, S^c$.
      Moreover,
      $$
      \begin{aligned}
         \Psi (Q(M)) =& \Psi (Q(M_1)) + \Psi (Q(M_2)) - \Psi (\DD_{|S|} \times \DD_{n-|S|}) \cdot \C \\
                      &- \sum_T \Psi (\DD_{|T \cap S|} \times \DD_{|T \setminus S|}) \cdot \D \cdot \Psi (\DD_{n-|T|}),
      \end{aligned}
      $$
      where the sum is over all proper subsets $T$ of $[n]$ such that
      $M|_T$ has at least four parallelism classes and
      $S$ and $S^c$ both contain at least two parallelism classes of $M|_T$.
   \end{proposition}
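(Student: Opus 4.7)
The plan is to invoke Theorem~\ref{cd-index-hyperplane-split-case} for $Q = Q(M)$ cut by the hyperplane $H$, and to identify each of its four ingredients with the corresponding terms in the proposition. First, Corollary~\ref{matroid-base-polytope-split-rank-two} ensures that $H$ splits $Q(M)$ into two matroid base polytopes, which one identifies with $Q(M_1)$ and $Q(M_2)$ by inspecting bases: the $M$-bases $B$ with $|B\cap S|\ge 1$ are exactly the bases of $M_2$, while those with $|B\cap S|\le 1$ are exactly the bases of $M_1$, so $Q^+ = Q(M_2)$ and $Q^- = Q(M_1)$. Since $S$ and $S^c$ are each unions of full parallelism classes of $M$, every pair $\{a,b\}$ with $a\in S$ and $b\in S^c$ consists of elements from distinct classes and is therefore a base of $M$; hence the vertices of $\widehat Q = Q(M)\cap H$ are in bijection with $S\times S^c$, and $\widehat Q$ is affinely isomorphic to $\DD_{|S|}\times\DD_{n-|S|}$.

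The crucial step is to classify the proper faces $\sigma$ of $Q(M)$ that meet both open halfspaces nontrivially. Such a $\sigma$ equals $Q(M_\sigma)$ where, by Proposition~\ref{bases-of-M-F}, $M_\sigma = \bigoplus_i (M|_{S_i})/S_{i-1}$ for some factor-connected flag $\flag$. Since the summand ranks sum to $r(M)=2$, either (A) one summand has rank $2$ and the rest rank $0$, or (B) two summands have rank $1$ and the rest rank $0$. In Case~(A), looplessness of $M$ forces the rank-$2$ summand to occupy the first step $(M|_{S_1})$ (otherwise $S_{i-1}$ would be a nonempty set of loops), giving $M_\sigma = M|_T\oplus(\text{loops on }[n]\setminus T)$ with $T := S_1$ and $\sigma = Q(M|_T)$. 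In Case~(B), looplessness forces the first rank-$1$ summand to sit at position $1$, and connectedness of $M|_{S_1}$ then forces $S_1$ to lie in a single parallelism class $P_\alpha$ of $M$; since $P_\alpha$ is contained entirely in $S$ or entirely in $S^c$, every base of $M_\sigma$ has its "first coordinate" on that same side of $H$, so no base of $M_\sigma$ can lie on the opposite side, and $\sigma$ does not cross $H$. Returning to Case~(A), $\sigma = Q(M|_T)$ crosses $H$ iff $M|_T$ has a base inside $S$ and a base inside $S^c$, iff $T$ meets at least two of $P_1,\dots,P_m$ and at least two of $P_{m+1},\dots,P_k$ --- exactly the index condition of the proposition's sum. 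Conversely, any such proper $T\subsetneq[n]$, after refining $\emptyset\subset T\subset[n]$ into a factor-connected flag (by inserting the elements of $[n]\setminus T$ one by one as rank-$0$ steps), yields precisely such a face of $Q(M)$.

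Finally, for each $\sigma = Q(M|_T)$ in the sum I would compute $\hat\sigma$ and $\widehat Q/\hat\sigma$. The section $\hat\sigma = \sigma\cap H$ has as vertices the $M|_T$-bases pairing one element of $T\cap S$ with one of $T\cap S^c$; by the same argument as in the first paragraph, every such pair is a base, so $\hat\sigma\cong\DD_{|T\cap S|}\times\DD_{|T\setminus S|}$. For the face figure $\widehat Q/\hat\sigma$, note that $\hat\sigma$ corresponds in the face lattice of $\widehat Q = \DD_{|S|}\times\DD_{n-|S|}$ to the pair $(T\cap S,\,T\cap S^c)$, and the interval above this pair is isomorphic to the Boolean lattice of rank $(|S|-|T\cap S|)+(|S^c|-|T\cap S^c|) = n-|T|$; hence $\widehat Q/\hat\sigma$ is combinatorially $\DD_{n-|T|}$. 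Substituting all of this into Theorem~\ref{cd-index-hyperplane-split-case} produces the stated identity. The main obstacle is the Case~(A)/Case~(B) dichotomy in the middle paragraph: one must read the parallelism structure of $M_\sigma$ directly off the flag and use the rank-$2$ constraint together with the fact that $S$ is a union of parallelism classes to rule out Case~(B) as a source of crossing faces.
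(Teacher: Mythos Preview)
Your argument is correct and reaches the same identity, but your classification of the faces $\sigma$ that cross $H$ follows a different route from the paper. The paper argues via the facet dichotomy from Section~\ref{sec-matroid-base-polytopes}: every flacet of $Q(M)$ is the set of bases meeting a fixed parallelism class $P_i$, and since each $P_i$ lies wholly in $S$ or wholly in $S^c$, a flacet cannot meet both open halfspaces; hence any crossing face $\sigma$ is an intersection of non-flacet facets, each of which is a single-element deletion, so $\sigma=Q(M|_T)$ for some $T\subseteq[n]$. The paper then reads the interval $[\hat\sigma,\widehat Q]$ as $[\sigma,Q(M)]$, which is manifestly the Boolean lattice on $[n]\setminus T$, giving $\Psi(\DD_{n-|T|})$ immediately.

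You instead work from the factor-connected flag description of a general face and split into the rank pattern $(2,0,\dots,0)$ versus $(1,1,0,\dots,0)$; in Case~(B) you use that a rank-$1$ restriction must sit inside a single parallelism class, hence inside $S$ or $S^c$, to rule out crossing. This is a sound alternative and is essentially the contrapositive of the paper's flacet observation (your Case~(B) faces are exactly those lying in some flacet). Your computation of $\widehat Q/\hat\sigma$ via the product-of-simplices face lattice also works, though the paper's shortcut $[\hat\sigma,\widehat Q]\cong[\sigma,Q(M)]\cong 2^{[n]\setminus T}$ is quicker. Either way the substitution into Theorem~\ref{cd-index-hyperplane-split-case} is the same.
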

   
   \begin{proof}
      Note that there is no flacet (i.e., facet corresponding to a flat of $M$)
      of $Q(M)$ which intersects both open halfspace given by $H$ nontrivially
      since every flacet of $Q(M)$ corresponds to a base set of the form
      $$
      \{ B \in \mathcal{B}(M) : |B \cap P_i| = 1  \}
      $$
      for some $i$.
      (The corresponding flacet has empty intersection with the open halfspace $H^- \setminus H$ if $i \le m$
      while it has empty intersection with $H^+ \setminus H$ otherwise.)
      If $\sigma$ is a face of $Q(M)$ which has nonempty intersection with both open halfspaces given by $H$,
      then $\sigma$ is the intersection of some facets of $Q(M)$ which are not flacets.
      Since each facet of $Q(M)$ which is not a flacet corresponds to the deletion of an element of $[n]$,
      $\sigma$ corresponds to a matroid $M|_T$ for some subset $T$ of $[n]$.
      Also, the upper interval $[\sigma \cap H, Q(M) \cap H]$ $(\cong [\sigma, Q(M)])$ is isomorphic to 
      the Boolean algebra of order $n - |T|$, 
      and hence $\Psi[(Q(M) \cap H) / (\sigma \cap H)] = \Psi (\DD_{n-|T|})$.
      Finally, $\sigma$ has nonempty intersection with both open halfspaces given by $H$ if and only if
      $M|_T$ has at least four parallelism classes and 
      $S$ and $S^c$ both contain at least two parallelism classes of $M|_T$.
      Now, the result follows from Theorem~\ref{cd-index-hyperplane-split-case}.
   \end{proof}
   
   The following corollary, which is obtained from Corollary~\ref{matroid-base-polytope-split-rank-two}
   and Proposition~\ref{cd-index-hyperplane-split-case-rank-two},
   expresses the $\C\D$-index of a matroid base polytope
   of a rank $2$ matroid $M$ 
   in terms of $\C\D$-indices of matroid base polytopes of matroids corresponding to compositions of length $\le 3$.
   For simplicity, we use the following notations:
   $$
   \begin{aligned}
      \lambda(\alpha, i) &= \left( \sum_{j=1}^{i-1} \alpha_j, \alpha_i, \sum_{j=i+1}^{\ell(\alpha)} \alpha_j \right)
                         &\text{ for } 2 \le i \le \ell(\alpha) - 1, \\
      \mu(\alpha, i)     &= \left(\sum_{j=1}^{i} \alpha_j, \sum_{j=i+1}^{\ell(\alpha)} \alpha_j \right)
                         &\text{ for } 1 \le i \le \ell(\alpha) - 1.
   \end{aligned}
   $$
   If $\alpha = (2,4,0,6,7)$, then
   $\lambda(\alpha, 4) = (6,6,7)$ and $\mu(\alpha, 4) = (12,7)$.
   For a rank $2$ matroid $M$ with parallelism classes $P_1, \dots, P_k$, 
   define $M_i$ be the rank $2$ matroid with two parallelism classes $P_1 \cup \dots \cup P_i$ and $P_{i+1} \cup \dots \cup P_k$.
   Note that $(M_\alpha)_i$ is isomorphic to $M_{\mu(\alpha, i)}$.
      
   \begin{corollary}
   \label{cd-index-of-rank-2-matroids}
      Let $\alpha$ be a composition of $n$ with at least three parts and 
      $M_\alpha$ be the corresponding rank $2$ matroid on $[n]$.
      Then the $\C\D$-index of $Q(M_\alpha)$ can be expressed as follows:
      $$
      \begin{aligned}
         \Psi(Q(M_\alpha)) =& \sum_{i = 2}^{\ell(\alpha)-1} \Psi(Q(M_{\lambda(\alpha, i)})) 
                              - \left( \sum_{i=2}^{\ell(\alpha)-2} 
                              \Psi(\DD_{\mu(\alpha,i)_1} \times \DD_{\mu(\alpha,i)_2}) \right) \cdot \C \\
                            &- \sum_{\substack{\beta < \alpha\\ \ell(\bar{\beta}) \ge 4}} \prod_{j=1}^{\ell(\alpha)} {\binom{\alpha_j}{\beta_j}} 
                             \left( \sum_{i=2}^{\ell(\bar{\beta})-2} 
                             \Psi(\DD_{\mu(\bar{\beta}, i)_1} \times \DD_{\mu(\bar{\beta},i)_2}) \right)
                             \cdot \D \cdot \Psi(\DD_{n-|\bar{\beta}|}).
      \end{aligned}
      $$
   \end{corollary}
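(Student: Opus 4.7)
The plan is to induct on $k := \ell(\alpha)$, with base case $k = 3$ and inductive step $k \ge 4$. The base case is immediate: since $\lambda(\alpha, 2) = \alpha$, the main sum collapses to $\Psi(Q(M_\alpha))$; the $\C$-correction sum (indexed by $2 \le i \le 1$) is empty; and the $\D$-correction sum is empty because every $\beta < \alpha$ satisfies $\ell(\bar\beta) \le 3$.

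For the inductive step with $k \ge 4$, I would apply Proposition~\ref{cd-index-hyperplane-split-case-rank-two} to $M_\alpha$ along the hyperplane $\sum_{e \in P_1 \cup P_2} x_e = 1$, which is a valid hyperplane split by Corollary~\ref{matroid-base-polytope-split-rank-two}. This produces $M_2 = M_{\lambda(\alpha, 2)}$ together with the matroid $M_1$ whose parallelism classes are $P_1 \cup P_2, P_3, \dots, P_k$ and which corresponds to the composition $\alpha' := (\alpha_1 + \alpha_2, \alpha_3, \dots, \alpha_k)$ of length $k - 1$. The induction hypothesis applied to $M_1$, together with the identities $\lambda(\alpha', i) = \lambda(\alpha, i+1)$ and $\mu(\alpha', i) = \mu(\alpha, i+1)$, will assemble the IH's main sum with $\Psi(Q(M_{\lambda(\alpha, 2)}))$ into $\sum_{i=2}^{k-1}\Psi(Q(M_{\lambda(\alpha, i)}))$, and will assemble the IH's $\C$-correction with the current step's $\Psi(\DD_{\mu(\alpha,2)_1} \times \DD_{\mu(\alpha,2)_2}) \cdot \C$ into $\sum_{i=2}^{k-2}\Psi(\DD_{\mu(\alpha,i)_1} \times \DD_{\mu(\alpha,i)_2}) \cdot \C$.

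The hard part will be matching the $\D$-correction. The IH's sum runs over weak compositions $\beta' < \alpha'$, which I would re-parametrize by $\beta < \alpha$ via $\beta'_1 = \beta_1 + \beta_2$ and $\beta'_j = \beta_{j+1}$ for $j \ge 2$; Vandermonde's identity $\binom{\alpha_1+\alpha_2}{\beta_1+\beta_2} = \sum_{a+b=\beta_1+\beta_2}\binom{\alpha_1}{a}\binom{\alpha_2}{b}$ converts the IH's multiplicities $\prod_j \binom{\alpha'_j}{\beta'_j}$ into the target multiplicities $\prod_j \binom{\alpha_j}{\beta_j}$ summed over the appropriate $\beta$'s. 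For each $\beta$, set $r := \ell(\bar\beta)$ and $\delta := [\beta_1 > 0] + [\beta_2 > 0]$; a short case analysis will then give the following. When $\delta \le 1$ one has $\bar{\beta'} = \bar\beta$, so only the IH contributes and delivers the full target inner sum $\sum_{l=2}^{r-2}\Psi(\DD_{\mu(\bar\beta,l)_1} \times \DD_{\mu(\bar\beta,l)_2})$; when $\delta = 2$ one has $\bar{\beta'} = (\bar\beta_1 + \bar\beta_2, \bar\beta_3, \dots, \bar\beta_r)$ of length $r - 1$ with $\mu(\bar{\beta'}, i) = \mu(\bar\beta, i+1)$, so the IH supplies $\sum_{l=3}^{r-2}$ while the current step's $\D$-correction supplies the missing $l = 2$ term (it fires exactly when $\delta = 2$ and $r \ge 4$, which is precisely when $P_1 \cup P_2$ and its complement each contain at least two parallelism classes of $M_\alpha|_T$). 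These two cases combine to give the target correction for every $\beta$ with $\ell(\bar\beta) \ge 4$, completing the induction.
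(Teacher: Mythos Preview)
Your proof is correct and follows essentially the same strategy as the paper: induct on $k=\ell(\alpha)$ by applying Proposition~\ref{cd-index-hyperplane-split-case-rank-two} to peel off one parallelism class at a time. The differences are purely organizational: the paper splits along $S=P_1\cup\cdots\cup P_{k-2}$ (merging the last two classes) rather than your $S=P_1\cup P_2$ (merging the first two), and the paper first establishes an intermediate formula with the $\D$-correction indexed by subsets $S\subset[n]$, only grouping by $\beta$ at the very end --- this lets the induction step go through without Vandermonde, whereas you induct directly on the $\beta$-indexed formula and invoke Vandermonde to reconcile $\binom{\alpha_1+\alpha_2}{\beta'_1}$ with $\sum\binom{\alpha_1}{\beta_1}\binom{\alpha_2}{\beta_2}$.
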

   
   %
   \begin{proof}
      %
%
      For simplicity, let $M := M_\alpha$ and $k := \ell (\alpha)$.
      After the relabeling, one may assume that the matroid $M$ has parallelism classes $P_1, \dots, P_k$
      with $|P_i| = \alpha_i$ for all $i = 1, \dots, k$.
      We first claim that 
      $$
      \begin{aligned}
      \Psi(Q(M)) = & \sum_{i = 2}^{k-1} \Psi(Q(M_{\lambda(\alpha, i)})) 
                     - \left( \sum_{i=2}^{k-2} \Psi(\DD_{\mu(\alpha,i)_1} \times \DD_{\mu(\alpha,i)_2}) \right) \cdot \C \\
                   &- \sum_S \left( \sum_{i=2}^{p(S)-2} 
                             \Psi(Q((M|_S)_i) \right) \cdot \D \cdot \Psi(\DD_{n-|S|}).
      \end{aligned}
      $$
      where the sum in the second line runs over all proper subsets $S$ of $[n]$
      such that $M|_S$ has at least four parallelism classes,
      and $p(S)$ is the number of parallelism classes of $M|_S$.
      We will use induction on $k$.
      
      \emph{Base Case}:
         If $k=3$, then the result is clear.
      
      \emph{Inductive Step}:
         By Corollary~\ref{matroid-base-polytope-split-rank-two},
         the hyperplane $H$ defined by 
         $$
         \sum_{e \in P_1 \cup \dots \cup P_{k-2}} x_e = 1
         $$ 
         gives a hyperplane split $Q(M^+) \cup Q(M^-)$ of $Q(M)$ where 
         $M^+$ is the matroid whose parallelism classes are $P_1, \dots, P_{k-2}, P_{k-1} \cup P_k$
         and the matroid $M^-$ has parallelism classes $P_1 \cup \dots \cup P_{k-2}, P_{k-1}, P_k$.
         Since $M^-$ is isomorphic to $M_{\lambda(\alpha, k-1)}$,
         Proposition~\ref{cd-index-hyperplane-split-case-rank-two} implies
         $$
         \begin{aligned}
            \Psi (Q(M)) =& \Psi (Q(M^+)) + \Psi (Q(M_{\lambda(\alpha, k-1)})) 
                           - \Psi (\DD_{\mu(\alpha, k-2)_1} \times \DD_{\mu(\alpha, k-2)_2}) \cdot \C \\
                         &- \sum_T \Psi (Q((M|_T)_{p(T)-2})) \cdot \D \cdot \Psi (\DD_{n-|T|}),
         \end{aligned}
         $$
         where the sum is over all proper subsets $T$ of $[n]$ such that
         $M|_T$ has at least four parallelism classes, two of which are subsets of 
         $P_{k-1}$ and $P_k$ respectively.
         Since 
         $M^+$ has $k-1$ parallelism classes, the induction hypothesis implies the claim.

      Since $M|_S$ corresponds to the composition 
      $$\beta_T := (|P_1 \cap S|, |P_2 \cap S|, \dots, |P_k \cap S|) < \alpha$$
      and there are 
      $$\prod_{j=1}^k \binom{\alpha_j}{\beta_j}$$
      subsets corresponding to $\beta < \alpha$,
      we finish the proof.
   \end{proof}
   
   Purtill~\cite{Purtill} shows that the $\C\D$-index of the simplex $\DD^n$ is 
   the $(n+1)$-st Andr\'{e} polynomial.
   Using the formula for the $\C\D$-index of a product of two polytopes given
   by Ehrenborg and Readdy~\cite{EhrenborgReaddy}, one can calculate
   terms in the second and the third sums in Corollary~\ref{cd-index-of-rank-2-matroids}.
   We still don't have a simple interpretation for the $\C\D$-index for $Q(M_\alpha)$
   when $\alpha$ has three parts.
   The first few values of the $\C\D$-index for $Q(M_\alpha)$, where $\alpha$ has three parts,
   are displayed in Table~\ref{table-cd-index-for-indecomposable-matroids}.
   
   \begin{table}[!ht]
   \begin{center}
      \begin{tabular}{|c|l|}
         \hline
         $\alpha$    & $\Psi (Q(M_\alpha))$ \\
         \hline \hline
         $(1,1,1)$ & $\C^2 + \D$ \\ 
         \hline
         $(2,1,1)$ & $\C^3 + 3 \C \D + 3 \D \C$ \\
         \hline
         $(3,1,1)$ & $\C^4 + 4 \C^2 \D + 8 \C\D\C + 5 \D \C^2 + 7 \D^2$ \\
         $(2,2,1)$ & $\C^4 + 5 \C^2 \D + 10 \C\D\C +  6 \D \C^2 + 10 \D^2$ \\
         \hline
         $(4,1,1)$ & $\C^5 + 5 \C^3 \D + 13 \C^2 \D \C + 15 \C \D \C^2 + 20 \C \D^2 + 7 \D \C^3 + 18 \D \C \D + 22 \D^2 \C$ \\
         $(3,2,1)$ & $\C^5 + 6 \C^3 \D + 17 \C^2 \D \C + 20 \C \D \C^2 + 28 \C \D^2 + 9 \D \C^3 + 26 \D \C \D + 33 \D^2 \C$ \\
         $(2,2,2)$ & $\C^5 + 7 \C^3 \D + 21 \C^2 \D \C + 24 \C \D \C^2 + 36 \C \D^2 + 10 \D \C^3 + 34 \D \C \D + 42 \D^2 \C$ \\
         \hline
         $(5,1,1)$ & $\C^6 + 6 \C^4 \D + 19 \C^3 \D \C + 29 \C^2 \D \C^2 + 38 \C^2 \D^2 + 24 \C \D \C^3 + 60 \C \D \C \D$ \\
                   & \qquad $+ 72 \C \D^2 \C + 9 \D \C^4 + 33 \D \C^2 \D + 65 \D \C \D \C + 47 \D^2 \C^2 + 64 \D^3$ \\
         
         $(4,2,1)$ & $\C^6 + 7 \C^4 \D + 24 \C^3 \D \C + 39 \C^2 \D \C^2 + 52 \C^2 \D^2 + 33 \C \D \C^3 + 86 \C \D \C \D$ \\
                   & \qquad $+ 104 \C \D^2 \C + 12 \D \C^4 + 48 \D \C^2 \D + 98 \D \C \D \C + 72 \D^2 \C^2 + 100 \D^3$ \\
         $(3,3,1)$ & $\C^6 + 7 \C^4 \D + 25 \C^3 \D \C + 42 \C^2 \D \C^2 + 55 \C^2 \D^2 + 36 \C \D \C^3 + 93 \C \D \C \D$ \\
                   & \qquad $+ 114 \C \D^2 \C + 13 \D \C^4 + 52 \D \C^2 \D + 109 \D \C \D \C + 81 \D^2 \C^2 + 112 \D^3$ \\
         $(3,2,2)$ & $\C^6 + 8 \C^4 \D + 30 \C^3 \D \C + 51 \C^2 \D \C^2 + 69 \C^2 \D^2 + 42 \C \D \C^3 + 116 \C \D \C \D$ \\
                   & \qquad $+ 142 \C \D^2 \C + 14 \D \C^4 + 64 \D \C^2 \D + 136 \D \C \D \C + 98 \D^2 \C^2 + 142 \D^3$ \\
         \hline
      \end{tabular}
      \begin{caption}
         {$\C\D$-index for $Q(M_\alpha)$ for composition $\alpha$ with three parts}
         \label{table-cd-index-for-indecomposable-matroids}
      \end{caption}
   \end{center}
   \end{table}

\def\cprime{$'$}





\end{document}